\documentclass{article}
\usepackage[english]{babel}
\usepackage{amsfonts, amsmath, amsthm, amssymb,amscd,indentfirst}
\usepackage{marginnote}
\usepackage{bbding}
\usepackage{xcolor}
\usepackage[scr=boondox]{mathalfa}
\usepackage{amsmath,amssymb,latexsym,indentfirst}
\usepackage{times}
\usepackage{palatino}
\usepackage{MnSymbol}
\usepackage[top=1.5in, bottom=1.5in, left=1.1in, right=1.1in]{geometry}

\newtheorem{theorem}{Theorem}[section]

\newtheorem{proposition}[theorem]{Proposition}

\newtheorem{definition}[theorem]{Definition}

\newtheorem{conjecture}[theorem]{Conjecture}

\newtheorem{remark}[theorem]{Remark}

\def\a{{\alpha}}

\def\a{{\alpha}}

\def\a{{\alpha}}

\begin{document}

\title{A mass-type invariant for smooth metric measure spaces and its relation with the fractional Yamabe problem}

\author{\textsc{S\'ergio Almaraz}\footnote{Partially supported by grant 201.049/2022, FAPERJ/Brazil, and grant 304894/2025-7, CNPq/Brazil.},  \textsc{Levi Lopes de Lima},  \textsc{and Shaodong Wang}\footnote{Partially supported by NSFC 12001364 and the Fundamental Research Funds for the Central Universities, No.30924010839.}}

\maketitle

\begin{abstract}
	On an asymptotically  Schwarzschild smooth metric measure space modelled on the Euclidean half-space, we define a mass-type quantity similar to the classical ADM mass in General Relativity and show its geometric invariance properties. Such quantity has a close relation with the fractional Yamabe problem and the relevant Green's function.  
\end{abstract}

%\tableofcontents
%\newpage

\section{Introduction}\label{sec:intro}
Arnowitt, Deser and Misner discovered in~\cite{arnowitt-deser-misner} a key quantity in the General Relativity theory which stands for the total mass of an isolated gravitational system. Mathematically, this system is represented by a Riemannian manifold asymptotically modelled on the Euclidean space $\mathbb R^N$. That mass quantity, named the {\it{ADM mass}} after the authors, was later proved by Bartnik in \cite{bartnik} and then by Chrusciel~\cite{chrusciel} to be independent of the chosen asymptotic coordinates.

The non-negativity of the ADM mass in case the physically isolated system is time-symmetric (when embedded in a Lorentzian space-time) under appropriate energy conditions is known as the {\it{Riemannian positive mass theorem}} and was proved by Schoen and Yau in \cite{schoen2, schoen-yau1} in dimensions $3\leq N\leq 7$, and by Witten in \cite{witten} in any dimension $N\geq 3$ assuming the manifold is spin. The results in \cite{schoen2, schoen-yau1} played a crucial role in the proof of the remaining cases of the Yamabe problem by Schoen in \cite{schoen}. Extensions to high dimensions of the results in \cite{schoen2, schoen-yau1} were obtained by Schoen and Yau in \cite{schoen-yau3} and more recently by Brendle and Wang in \cite{brendle-wang}. 

There have been enormous studies concerning mass-type geometric quantities under various assumptions ever since. In this paper, we study a mass-type geometric invariant quantity in smooth metric measure spaces asymptotically modelled on 
$$
\mathbb R^{n+1}_+=\{x=(x_1,...,x_{n+1})\in \mathbb R^{n+1}\:|\:x_{n+1}\geq 0\},\qquad n\geq 2,
$$
and relate it with the fractional Yamabe problem which is an extension of the classical Yamabe problem.

Smooth metric measure spaces (SMMS for short) are $(n+1)$-dimensional Riemannian manifolds $(M,g)$ equipped with a measure $v^m dvol_g$ which is multiple of the metric's volume element $dvol_g$ by a non-negative smooth function $v^m$, with $v^{-1}(0)$ being the boundary of $M$ and $m\in \mathbb{R}\cup \{\pm \infty\}$.
The interest in such objects has increased in the last decades and one major question is how geometric invariants are adapted to this setting. 
The classical example is the Bakry-Emery Ricci tensor introduced in \cite{bakry-emery}. 
Another important geometric invariant of a SMMS is its weighted scalar curvature $R_{(g,v,m)}$ (see formula \eqref{eq:R} below) which extends the scalar curvature $R_g$ of a Riemannian manifold. Also relevant to our context is the modified weighted mean curvature $\mathcal H_{(g,v,m)}$  (see \eqref{eq:H}) which extends the mean curvature $H_g$ of a smooth hypersurface on a Riemannian manifold. %Both $R_{(g,v,m)}$ and $\mathcal H_{(g,v,m)}$ are used in Section \ref{sec:interpr:mass} for motivating the mass expression \eqref{eq:mass:0}.

In this paper we assume $n\geq 2$ and work with metrics expanded as 
\begin{equation}\label{exp:a:s}
g_{ij}(x)=\left(1+a|x|^{1-n-m}\right)\delta_{ij}+O'(|x|^{s-n-m}),\qquad i,j=1,...,n+1,
\end{equation}
where $\delta$ is the Euclidean metric, and weight functions $v$ in $v^mdvol_g$ expanded as 
\begin{equation}\label{exp:a:s:v}
v(x)=\left(1+\displaystyle\frac{a}{2}|x|^{1-n-m}\right)x_{n+1}+x_{n+1}O'(|x|^{s-n-m})
\end{equation}
in the asymptotic coordinates $x=(x_1,...,x_{n+1})\in\mathbb R^{n+1}_+$, for some constants $a$ and $0<s<1$. 
Here, and in the rest of the paper, $O'(r^{\a})$ stands for a term whose norm is bounded by $Cr^{\a}$ and the norm of its derivative is bounded by $Cr^{\a-1}$ for some $C>0$.
Precisely, we work with an asymptotically Schwarzschild SMMS $(M,g,v^m dvol_g)$ as in Definition~\ref{def:as:flat} below and define its mass by the expression
	\begin{align}\label{eq:mass:0}
		\mathfrak m_{(g,v,m)}=\lim_{\rho\to\infty}\Big \{
		&\int_{|x|=\rho, x_{n+1}>0}\sum_{i=1}^{n+1}\mathbb U_i(g-\delta, v-x_{n+1}) \frac{x_i}{|x|}x_{n+1}^md\sigma
		\\
		&+\int_{|x|=\rho, x_{n+1}= 0}\sum_{\alpha=1}^{n}\overline {\mathbb U}_\alpha(g-\delta, v-x_{n+1}) \frac{x_\alpha}{|x|}x_{n+1}^m d\overline \sigma
		\Big \},\notag
	\end{align}
	where 
	$$
	\mathbb U(e,w)=\text{div}_\delta e-d\text{tr}_\delta e+mx_{n+1}^{-1}\nabla_\delta x_{n+1}\righthalfcup e-2mx_{n+1}^{-1} dw
	$$ 
	and $\overline {\mathbb U}(e,w)=\nabla_\delta x_{n+1}\righthalfcup e$ are the {\rm{charge integrands}} one-forms, and $d\sigma$ and $d\overline\sigma$ are Euclidean area elements. 
Here, $e$ is a symmetric two-tensor on $M$ and $w$ is a smooth function on $M$.

Our first main result is the following: 

\begin{theorem}\label{thm:main}
	Suppose $0\leq m<1$, or $-1<m<0$ with $O'(|x|^{s-n-m})=O'(|x|^{s-n-m})\delta_{ij}+x_{n+1}O'(|x|^{s-n-m-1})$ in \eqref{exp:a:s}.
	The limit defining $\mathfrak m_{(g,v,m)}$ in \eqref{eq:mass:0} exists (and is finite) and 
	$$
	\mathfrak m_{(g,v,m)}=(m+n)(m+n-1)a \sigma_{n,m},
	$$
	where 
	$$
	\sigma_{n,m}=\int_{|x|=1, x_{n+1}> 0}x_{n+1}^m d\sigma.
	$$
	This limit is independent of the chosen asymptotic coordinates.
\end{theorem}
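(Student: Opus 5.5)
The plan has three parts. First I compute the charge integrands exactly on the model part of $(g,v)$. Then I show that every error term $O'(|x|^{s-n-m})$ contributes nothing in the limit; this is where the hypotheses on $m$ enter. Finally I prove coordinate independence by showing that the constant $a$ is intrinsic.

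\emph{Step 1: the model part.} Write $r=|x|$ and $f=a r^{1-n-m}$, $e=f\delta+E$, $w=h x_{n+1}+x_{n+1}F$, where $h=\frac a2 r^{1-n-m}$ and $E,F=O'(r^{s-n-m})$. For $e=f\delta$ we have $\text{div}_\delta e-d\,\text{tr}_\delta e=-n\,df$, whose radial component is $n(n+m-1)a r^{-n-m}$. The term $m x_{n+1}^{-1}\nabla_\delta x_{n+1}\righthalfcup e=m f x_{n+1}^{-1}dx_{n+1}$ has radial component $mf/r=ma r^{-n-m}$. Since $dw=x_{n+1}dh+h\,dx_{n+1}$, the term $-2mx_{n+1}^{-1}dw$ has radial component $-2mh'-2mh/r=m(n+m-1)ar^{-n-m}-mar^{-n-m}$. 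Summing gives $(n+m)(n+m-1)a\,r^{-n-m}$. Integrating against $x_{n+1}^m d\sigma$ over $\{|x|=\rho,\,x_{n+1}>0\}$, which scales like $\rho^{n+m}\sigma_{n,m}$, yields exactly $(m+n)(m+n-1)a\sigma_{n,m}$ for every $\rho$. On the boundary piece the model contributes nothing, because $\overline{\mathbb U}_\alpha(f\delta,\cdot)=f\delta_{n+1,\alpha}=0$ for $\alpha\le n$.

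\emph{Step 2: error terms.} The derivative terms $\partial E$ and $\partial F$ are $O(r^{s-n-m-1})$. The pieces $x_{n+1}^{-1}dw$ coming from $F$ are $F\,dx_{n+1}/x_{n+1}\cdot x_{n+1}+dF$, so they are also $O(r^{s-n-m-1})$ with no singular factor. Integrated against $x_{n+1}^m$ over the hemisphere, these give $O(\rho^{s-1})\to0$.

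The delicate term is $m x_{n+1}^{-1}E(\partial_{n+1},\partial_i)\frac{x_i}{r}$, which carries the weight $x_{n+1}^{m-1}$. The argument splits by the sign of $m$:
\begin{itemize}
\item For $0<m<1$ the weight $x_{n+1}^{m-1}$ is integrable on the hemisphere, and the contribution is $O(\rho^{s-1})$.
\item For $m=0$ the coefficient $m$ kills the term.
\item For $-1<m<0$ I use the extra hypothesis $E=E^0\delta+x_{n+1}E^1$. The $x_{n+1}E^1$ part cancels the singular factor. The $E^0\delta$ part produces $m E^0 x_{n+1}/(r\,x_{n+1})$, which is regular.
\end{itemize}

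The boundary integral is $\int_{x_{n+1}=0}E_{n+1,\alpha}\frac{x_\alpha}{r}x_{n+1}^m\,d\overline\sigma$, again split by the sign of $m$:
\begin{itemize}
\item For $m>0$ it vanishes identically because of the factor $x_{n+1}^m$.
\item For $m=0$ it is $O(\rho^{s-n})\cdot\rho^{n-1}\to0$.
\item For $-1<m<0$ the hypothesis gives $E_{n+1,\alpha}=x_{n+1}O'$ off the diagonal, so $x_{n+1}^mE_{n+1,\alpha}\to0$ at the boundary and the term vanishes.
\end{itemize}
This proves that the limit exists and equals $(m+n)(m+n-1)a\sigma_{n,m}$.

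\emph{Step 3: invariance.} Since the value depends only on $a$, it suffices to show that $a$ does not depend on the asymptotic chart. Let $y=\Phi(x)$ be a second chart in which $(g,v)$ has the same form, with constant $a'$. Following Bartnik's argument, I would first show that $\Phi(x)=Ox+b+o(|x|)$ with $O$ orthogonal, using that $g$ is $C^0$-close to $\delta$ in both charts. The condition $v\sim x_{n+1}\sim y_{n+1}$, together with $\Phi$ mapping the boundary $v^{-1}(0)$ to itself, forces $O$ to preserve $\mathbb R^{n+1}_+$ and $b$ to be tangent to the boundary. Next I would improve this to $\Phi(x)=Ox+b+O(|x|^{1-\epsilon})$, so that $|y|=|x|(1+o(1))$ with control on $|y|^{1-n-m}-|x|^{1-n-m}$. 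Finally $a$ is recovered intrinsically as $a=\lim 2|x|^{n+m-1}\big(v/x_{n+1}-1\big)$, and comparing this with the same expression in $y$ gives $a=a'$.

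I expect the main obstacle to be the decay improvement in Step 3: upgrading $\Phi$ from a mere asymptotic isometry to $Ox+b$ plus errors fast enough that the $r^{1-n-m}$ coefficient is unaffected. Here I would use the $g$-harmonic-type equation satisfied by the components of $\Phi$, equivalently the pullback relation $\Phi^*g=g$ together with the decay of $\partial g$, to bound the second derivatives of $\Phi$ and integrate them from infinity.
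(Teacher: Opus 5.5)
Your Steps 1 and 2 are correct and coincide with the paper's direct computation of the flux. The gap is in Step 3.

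Since $v$ is a single function, your two limit formulas give $a-a'=\lim 2|x|^{n+m-1}\big(\Phi_{n+1}(x)/x_{n+1}-1\big)$. So ``comparing'' them proves $a=a'$ only if $\Phi_{n+1}(x)=x_{n+1}\big(1+o(|x|^{1-n-m})\big)$ uniformly up to the boundary. That is a statement at exactly the order of the mass term. An estimate $\Phi=Ox+b+O(|x|^{1-\epsilon})$ says nothing at that scale, and control of $|y|^{1-n-m}-|x|^{1-n-m}$ is beside the point.

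Your proposed upgrade cannot supply it either. Compose with an isometry of $\mathbb R^{n+1}_+$ so that $\partial\Phi\to I$, and set $f=|x|^{1-n-m}$. The transformation law of Christoffel symbols under $\Phi^*\widetilde g=g$ then gives $\partial_i\partial_j\Phi_k=\frac{a-a'}{2}(\partial_if\,\delta_{jk}+\partial_jf\,\delta_{ik}-\partial_kf\,\delta_{ij})+o(|x|^{-n-m})$. Integrating from infinity yields only $\partial\Phi-I=O(|x|^{1-n-m})$. This is exactly borderline, with leading coefficient proportional to $a-a'$, so the argument is circular.

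The underlying issue is that $v$ alone cannot detect $a$. A reparametrization $y_{n+1}\approx x_{n+1}(1+c|x|^{1-n-m})$ shifts the coefficient in the $v$-expansion. Only the metric, which carries the \emph{same} constant, rules this out, and your plan never uses $g$ at the critical order.

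The paper supplies this through the flux structure. With $\Phi$ normalized to an isometry up to $O'(|x|^{1-\tau})$, $\tau>(n+m-1)/2$, one has $e-\Phi^*\widetilde e=L_Z\delta+O(|x|^{-2\tau})$ and $w-\widetilde w\circ\Phi=Z_{n+1}$. Moreover $\mathbb U_i(L_Z\delta,Z_{n+1})x_{n+1}^m=\sum_j\big((Z_{i,j}-Z_{j,i})x_{n+1}^m\big)_{,j}$, whose flux over the hemisphere reduces to a vanishing boundary term. The change of integration surface is harmless because $\mathbb U(\widetilde e^0,\widetilde w^0)y_{n+1}^m$ is divergence free.

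Alternatively, your pointwise idea can be rescued by bringing in $g$:
\begin{itemize}
\item Differentiating $v\circ\phi^{-1}=v\circ\widetilde\phi^{-1}\circ\Phi$ gives $\partial_{n+1}\Phi_{n+1}=1+\frac{a-a'}{2}|x|^{1-n-m}\big(1-(n+m-1)\frac{x_{n+1}^2}{|x|^2}\big)+o(|x|^{1-n-m})$.
\item Insert this into $g_{n+1\,n+1}(x)=\widetilde g_{kl}(\Phi(x))\,\partial_{n+1}\Phi_k\,\partial_{n+1}\Phi_l\geq\big(1+a'|\Phi(x)|^{1-n-m}\big)(\partial_{n+1}\Phi_{n+1})^2-C|x|^{s-n-m}$.
\item Along the $x_{n+1}$-axis this gives $(a-a')(n+m-1)\geq o(1)$, hence $a\geq a'$.
\item Exchanging the two charts gives $a=a'$.
\end{itemize}
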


Theorem \ref{thm:main}, which is proved in Sections \ref{sec:mass:def} and \ref{sec:geom:inv}, characterizes $	\mathfrak m_{(g,v,m)}$ as a geometric invariant of the asymptotically Schwarzschild SMMS $(M,g,v^mdvol_g)$. The assumption $O'(|x|^{s-n-m})=O'(|x|^{s-n-m})\delta_{ij}+x_{n+1}O'(|x|^{s-n-m-1})$ in the expansion of $g$ in \eqref{exp:a:s} for the case $-1<m<0$ in the theorem is justified by the model case when the ASSMMS comes from the stereographic projection of the Green's function, which is the content of Theorem~\ref{thm:second} below.
Our expressions fit Michel's construction in \cite{michel} which comprises most well known mass-type geometric invariants in various different settings. This is the content of Section \ref{sec:interpr:mass}.

%\begin{remark}
%The charge density $\mathbb U$ may have a singular behavior on the boundary due to negative powers of $x_{n+1}$. The hypothesis $m\geq 0$ is necessary to ensure the convergence of the integral involving $\mathbb U$ in  \eqref{eq:mass:0} (see the proof of Proposition \ref{lemma:mass}). On the other hand, ${\overline {\mathbb U}}$ has no negative powers of $x_{n+1}$ in its definition so that $m\geq 0$ guaranties that the integral involving ${\overline {\mathbb U}}$ in \eqref{eq:mass:0} makes sense. In particular, that integral vanishes when $m>0$.
%In view of the weight $x_{n+1}^m$ in the integrals in \eqref{eq:mass:0}, the hypothesis $m\geq 0$ is necessary for essentially two reasons. Firstly, it is used to ensure the convergence of the integrals near to $x_{n+1}=0$ on hemispheres. Secondly, it ensures that the boundary terms indeed make sense.  However, formally all the corresponding computations still work for $m<0$.  Also observe that the integral involving $\overline {\mathbb U}$ in \eqref{eq:mass:0} vanishes when $m>0$.
%\end{remark}

The relevance of this mass quantity comes from its relation with the $\gamma$-fractional Yamabe problem and a relevant Green's function. 
This relies on the fractional GJMS operator, also known as the conformal fractional Laplacian operator, which is a non-local operator of fractional order $2\gamma$ defined on the boundary of a conformally compact Einstein manifold by using scattering theory. 
Kim, Musso and Wei  proved in \cite{kim-musso-wei} that the $\gamma$-fractional Yamabe problem is solvable when either $n=2$ or $n\geq 3$ and the boundary manifold is locally conformally flat and the Green's function of the conformal fractional Laplacian is expanded as
\begin{equation}\label{eq:green:0}
G_p(q)=c_{n,\gamma}d(p,q)^{2\gamma-n}+A+O'(d(q,p)^{\min\{1,2\gamma\}}),
\end{equation}
with $A$ being a positive constant and $d(\cdot,\cdot)$ the distance function on the manifold (see Conjecture \ref{conj:green}). 
%\vspace{0.2cm}
%\noindent{\bf{Notation.}}
%Here, and in the rest of the paper, $O(r)$ stands for a term whose norm is bounded by $Cr$ for some $C>0$.
%\vspace{0.2cm}
%Wang and Zhou~\cite{wang-zhou} recently announced the proof of Conjecture~\ref{conj:green}, namely, that expression ~\eqref{eq:green:0} holds with a positive constant $A$. 
The expansion \eqref{eq:green:0}, regardless the sign of $A$, was obtained by Mayer and Ndiaye in \cite{mayer-ndiaye1} and an alternate solution for the $\gamma$-fractional Yamabe problem was obtained by the same authors in \cite{mayer-ndiaye2} by means of Bahri-Coron methods.

The relation of the constant $A$ in \eqref{eq:green:0} with our mass quantity is given by the following theorem:
\begin{theorem}\label{thm:second}
The stereographic projection of the compactification of a conformally compact Einstein manifold is an asymptotically Schwarzschild SMMS $(M,g,v^m dvol_g)$, with $m=1-2\gamma,$ where $O'(|x|^{s-n-m})=O'(|x|^{s-n-m})\delta_{ij}+x_{n+1}O'(|x|^{s-n-m-1})$ in \eqref{exp:a:s}. If $0<\gamma<1$, its mass is a positive multiple of the constant $A$ in equation \eqref{eq:green:0}.
\end{theorem}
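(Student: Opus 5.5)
We will prove Theorem~\ref{thm:second} by building the asymptotically Schwarzschild SMMS directly from the Green's function and then reading off the constant $a$ in \eqref{exp:a:s}. Once $a$ is expressed in terms of $A$, Theorem~\ref{thm:main} gives the mass.

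\textbf{Step 1 (the extension problem).} Let $(X^{n+1},g_+)$ be conformally compact Einstein with conformal infinity $(M^n,[\hat g])$, and let $\rho$ be the geodesic defining function, so that $\bar g=\rho^2 g_+$. By the Chang--Gonz\'alez extension, $P_\gamma$ is realized as a weighted Dirichlet-to-Neumann map. With $m=1-2\gamma$ and the measure $\rho^{m}dvol_{\bar g}$, the weighted conformal Laplacian $L_m=-\mathrm{div}(\rho^{m}\nabla\cdot)+E(\rho)$ satisfies $\lim \rho^{m}\partial_\nu U = c\,P_\gamma u$. Hence $(\bar X,\bar g,\rho^m dvol_{\bar g})$ is an SMMS with $v=\rho$, and $v^{-1}(0)=\partial X$.

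\textbf{Step 2 (local structure near $p$).} We assume $M$ is locally conformally flat near $p$, as in \cite{kim-musso-wei}. Then we may choose a representative and Fermi-type coordinates in which $\bar g=\delta+O(\rho^{2})$, or even exactly flat near $p$, with $\rho=x_{n+1}$ to high order. For $m\ne 0$ the expansion of $\bar g$ involves only even powers of $x_{n+1}$ up to order $n$; this is what produces the special form $O'\delta_{ij}+x_{n+1}O'$ of the error term.

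\textbf{Step 3 (Green's function and inversion).} Let $\mathcal G$ be the extension of $G_p$ solving $L_m\mathcal G=0$ in the interior with boundary data matching \eqref{eq:green:0}. Near $p$ its expansion is $\mathcal G=c|y|^{2\gamma-n}+A+O'(|y|^{\min\{1,2\gamma\}})$, where $|y|$ is the distance in $\bar X$. Consider the weighted conformal change
\[
\tilde g=\mathcal G^{\frac{4}{n+m-1}}\bar g,\qquad \tilde v=\mathcal G^{\frac{2}{n+m-1}}\rho,
\]
which preserves the SMMS structure and makes $\mathcal G$ the conformal factor. Apply the inversion $x=y/|y|^2$. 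Since $n+m-1=n-2\gamma$, we get $\mathcal G^{4/(n-2\gamma)}=c'|y|^{-4}\big(1+\tfrac{4A}{c(n-2\gamma)}|y|^{n-2\gamma}+\dots\big)$. The factor $|y|^{-4}$ turns the flat metric into $\delta$ in the $x$ coordinates, and $|y|^{n-2\gamma}=|x|^{1-n-m}$. This yields \eqref{exp:a:s} with $a=\frac{4A}{c(n+m-1)}$ up to a positive normalization. The same computation for $v$, using $\rho=x_{n+1}|y|^2$, gives \eqref{exp:a:s:v} with coefficient $a/2$. The remainders come from $O'(|y|^{\min\{1,2\gamma\}})$ and from the $O(\rho^2)$ metric corrections, and they become $O'(|x|^{s-n-m})$ with $s=\max\{0,1-2\gamma\}+\epsilon<1$ after slight adjustment. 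Conformal flatness is what guarantees that no intermediate terms of order between $|x|^{-2}$ and $|x|^{1-n-m}$ appear.

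\textbf{Step 4 (conclusion).} Theorem~\ref{thm:main} applies because $-1<m<1$ and the error has the required structure. It gives $\mathfrak m=(m+n)(m+n-1)a\,\sigma_{n,m}$. For $0<\gamma<1$ we have $n+m-1=n-2\gamma>0$, $c>0$ and $\sigma_{n,m}>0$, so $\mathfrak m$ is a positive multiple of $A$.

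\textbf{Main obstacle.} The hardest step is Step 3: justifying that the interior extension $\mathcal G$ has the expansion $c|y|^{2\gamma-n}+A+\dots$ in the full half-ball, not just on the boundary, with derivative control. The remainder must also carry the factor $x_{n+1}$ in the off-diagonal part. We would prove this by subtracting the explicit half-space fundamental solution $c|y|^{2\gamma-n}$, which solves the flat extension problem exactly. The difference then solves a degenerate elliptic equation with an $A_2$ weight $x_{n+1}^{m}$, and we would use the Fabes--Kenig--Serapioni and Cabr\'e--Sire regularity theory to obtain the needed $C^{1}$-type estimates.
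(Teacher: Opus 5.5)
Your overall route is the paper's. You conformally change $(\overline g,\rho)$ by the Green's function, invert via $x=y/|y|^2$, read off $a=\frac{4A}{c(n-2\gamma)}$ (the paper normalizes $c=1$ and gets $a=\frac{4A}{n-2\gamma}$), and apply Proposition~\ref{lemma:mass}.

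\textbf{The gap.} It lies in the normal form for $\overline g$ that drives your error analysis.
\begin{itemize}
\item With only $\overline g=\delta+O(\rho^2)$, the correction $O(y_{n+1}^2)$ becomes, after inversion and multiplication by $G_p^{4/(n-2\gamma)}\sim|x|^4$, a non-conformal error $O(x_{n+1}^2|x|^{-4})$. Where $x_{n+1}\sim|x|$, this has size $|x|^{-2}$.
\item Such a term is $O(|x|^{s-n-m})$ for some $s<1$ only if $n-1-2\gamma<1$. That means $n=2$, or $n=3$ with $\gamma>1/2$.
\item Whenever $n>2+2\gamma$ it even dominates the Schwarzschild term $a|x|^{2\gamma-n}$. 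Then \eqref{exp:a:s} fails and the mass formula cannot be invoked.
\item Your sentence that conformal flatness rules out ``intermediate terms between $|x|^{-2}$ and $|x|^{1-n-m}$'' points the right way, but it contradicts the $O(\rho^2)$ you actually use.
\item ``Exactly flat near $p$'' is not available. The (trace-free part of the) coefficient of $\rho^n$ in $h_\rho$ is not locally determined by the boundary metric.
\end{itemize}

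\textbf{What is needed.} The paper imports from Kim--Musso--Wei, as \eqref{eq:coord}, a chart with $y_{n+1}=\rho$ exactly and $\overline g=\delta+O(y_{n+1}^n)$. This follows from two facts:
\begin{itemize}
\item one can take the boundary representative flat near $p$;
\item in the Fefferman--Graham expansion of $h_\rho$, every coefficient below order $n$ is locally determined by the boundary metric, hence vanishes. For $n$ even the same holds for the log term and the trace of the $\rho^n$ coefficient.
\end{itemize}
The inverted error is then $O(x_{n+1}^n|x|^{-2n})=x_{n+1}O(|x|^{-n-1})$. This gives the right size for $s\geq 1-2\gamma$ and the special form $x_{n+1}O'(|x|^{s-n-m-1})$ for $s\geq m$. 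It is the vanishing to order $n$ at $y_{n+1}=0$, not the evenness of the expansion, that produces the factor $x_{n+1}$.

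\textbf{Misplaced obstacle.} The step you single out as the main obstacle is not where any work is done. The expansion of $G_p$ on $\overline X$ near $p$ is exactly \eqref{eq:green:0}, obtained by Mayer--Ndiaye. The paper simply uses it in the coordinates \eqref{eq:coord}, so no weighted regularity argument is needed here. The real input you are missing is the order-$n$ flatness of $\overline g$.

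\textbf{Minor point.} For $n=2$ and $\gamma>1/2$, the quadratic term of order $|y|^{2(n-2\gamma)}$ in the binomial expansion of $G_p^{4/(n-2\gamma)}$ forces $s\geq 2\gamma-1$. There your choice $s=\max\{0,1-2\gamma\}+\epsilon$ should be replaced by something like $s\geq|1-2\gamma|$. The normalization constant $c$ only rescales coordinates and does not affect positivity of the multiple.
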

Stereographic projections here (see Definition \ref{def:ster:proj} below) are extensions of similar constructions used by Schoen in the solution of the Yamabe problem in \cite{schoen}.  
They make use of the Green's function for the fractional conformal Laplacian.
The precise statement of Theorem \ref{thm:second} is Theorem \ref{propo:as:flat} in Section~\ref{sec:mass:def}.

Theorems \ref{thm:main} and \ref{thm:second} give some geometrical interpretation for the constant $A$ in equation \eqref{eq:green:0}.
The case $m=0$ is the one handled in \cite{almaraz-barbosa-lima} where a positive mass theorem is proved. This is the content of the following remark.
\begin{remark}
When $m=0$ the SMMS becomes a classical Riemannian manifold $(M,g)$ with boundary and $R_{(g,v,m)}$ and $\mathcal H_{(g,v,m)}$ reduce to $R_g$ and $H_g$ respectively. The asymptotically Schwarzschild hypothesis \eqref{exp:a:s} in this case ensures that $R_g$ is integrable on $M$ and $H_g$ is integrable on the boundary $\Sigma$. Under these hypotheses, the results of \cite{almaraz-barbosa-lima} ensure that the mass is non-negative when $R_g\geq 0$ and $H_g\geq 0$, and it is zero if and only if $(M,g)$ is isometric to $(\mathbb R^{n+1}_+,\delta)$.
\end{remark}

\begin{remark}[On the positive mass conjecture]
By analogy with the classical positive mass theorem, one could conjecture that the dominant energy conditions (non-negativity of the weighted scalar curvature and of the modified weighted mean curvature) together with their integrability would imply non-negativity of the mass. However, we find it hard to impose such a conjecture for essentially two reasons. The first one is that the integrability of those curvatures fail to ensure the well-definedness of our mass due to singular behaviors of our weighted curvatures. The second one is that it is not clear whether the modified weighted mean curvature is non-negative for the stereographic projection of a conformally compact Einstein manifold. Therefore, we do not have at the moment a strong evidence for the holding of a parallel of the classical positive mass theorem. This seems to be a very intriguing and difficult subject to be investigated in a future work. 
\end{remark}

The paper is organized as follows. In Sections 2 and 3, we collect definitions and basic results about the fractional Yamabe problem and smooth metric measure spaces. Our mass quantity is then defined in Section 4 where we also give the proofs of the first part of Theorem~\ref{thm:main} and Theorem~\ref{thm:second}. In Section 5 of the paper, we justify the choice of the mass expression, while the geometric invariance of the mass, i.e., the second part of Theorem~\ref{thm:main} is proved in Section 6. Finally, some calculations used in Section 5 are obtained in the Appendix Section.

%%%%%%%%%%%%%%%%%%%%%%%%%%%%%%%%%%%%%%
\section{The fractional Yamabe problem}\label{sec:frac:yamabe}

We begin with the concept of a conformally compact manifold:
\begin{definition}
We say that a Riemannian manifold $(X,g_+)$ is {\rm{conformally compact}} if there is a compact manifold $\overline X$ such that $X$ is the interior of $\overline X$ and there is a neighborhood of the boundary $N$ of $\overline X$ such that the metric $g_+$ is expressed as 
$$
g_+=\rho^{-2}\overline g,
$$
for some metric $\overline g$ on $\overline X$ and some smooth non-negative function $\rho$ satisfying $\rho^{-1}(0)=N$ and $d\rho\neq 0$ everywhere on $N$. In this case, $\rho$ is said to be a {\rm{defining function}} of $(X,g_+)$. If in addition $\rho$ satisfies $|d\rho|_{\overline g}=1$ on $N$, $(X,g_+)$ is said to be {\rm{asymptotically hyperbolic}}.
\end{definition}

If $(X,g_+)$ is conformally compact, then it uniquely determines a conformal class of metrics on $N$, namely the class $[h]$, where $h=(\rho^2g_+)|_N$ for a defining function $\rho$. We say that $(N,[h])$ is the {\it{conformal infinity}} of $(X,g_+)$. 

The terminology asymptotically hyperbolic is justified by the fact that when $(X,g_+)$ is conformally compact then its Riemann tensor is expressed as 
$$
R_{ijkl}=-|d\rho|^2_{\overline g}\left((g_+)_{ik}(g_+)_{jl}-(g_+)_{il}(g_+)_{jk} \right)+O(\rho), \:\:\text{near}\: N,
$$  
where the $(0,4)$-Riemann tensor is defined in local coordinates by 
$$
R_{ijkl}=g(R(\partial_i,\partial_j)\partial_l,\partial_k).
$$
In particular, being asymptotically hyperbolic means that $g_+$ has negative constant sectional curvature up to a first order term, as $\rho\to 0$.

It is shown in \cite[Lemma 2.1]{graham} that, given a conformally compact asymptotically hyperbolic manifold $(X,g_+)$, there is a unique defining function $\rho$ such that the metric $g_+$ is expressed as 
$$
g_+=\rho^{-2}(d\rho^2+h_\rho),\:\:\text{on a neighborhood of}\: N, 
$$
where $h_\rho$ is a one-parameter family of metrics on $N$ with $h_0=h$. Moreover, when $g_+$ is Einstein, $h_\rho$ has an asymptotic expansion which is even in powers of $\rho$ at least up to order $n$. In particular, $N$ is totally geodesic with respect to $\overline g=\rho^2g_+$.
This defining function is said to be {\it{geodesic}}.

The case of our main interest is when the conformally compact manifold $(X,g_+)$ of dimension $n+1$ is Einstein with negative Ricci curvature, i.e.,
$$
\text{Ric}_{g_+}=-ng_+.
$$
This particular case of an asymptotically hyperbolic manifold is called {\it{conformally compact Einstein}} (C.C.E. for short) or {\it {Poincar\'e Einstein}}.

For the rest of this subsection, we assume that $(X,g_+)$ is C.C.E. with dimension $n+1$ and $\rho$ stands for its geodesic defining function.
In what follows, we briefly recall the construction of the scattering operators 
$$
S(s):C^\infty(N)\to C^\infty(N), \:\:s\in\mathbb C, \:\:\text{Re}\, s>n/2.
$$
We refer the reader to Section 2 of \cite{case-chang} and the references therein for details.
Assume that 
\begin{equation}\label{eq:s}
\begin{cases}
\text{Re}\,s>\frac{n}{2},\:\:
s\notin \frac{n}{2}+\mathbb N,
\\
s(n-s)\:\text{does not belong to the point spectrum of $-\Delta_{g_+}$.}
\end{cases}
\end{equation}
Consider the Poisson equation 
\begin{equation}\label{eq:poisson}
-\Delta_{g_+}u-s(n-s)u=0,\:\:\text{on}\:\:X.
\end{equation}
Given $f\in C^\infty(N)$, \eqref{eq:poisson} has a unique solution of the form 
\begin{equation}\label{eq:form:u}
u=F\rho^{n-s}+H\rho^s,\:\: F, H\in C^\infty(N),\:\:F|_N=f,
\end{equation}
and $F$ expands as $F=f_{(0)}+f_{(2)}\rho^2+f_{(4)}\rho^4+...$, for $f_{(0)}=f$, where every other $f_{(2l)}$ is also determined by $f$.
The {\it{scattering operator}}, defined by $S(s)f=H|_N$, determines a family of meromorphic pseudo-differential operators in $\text{Re}\,s>n/2$ having poles corresponding to the elements of the point spectrum of $-\Delta_{g_+}$. 

Following Graham and Zworski in \cite{graham-zworski}, the scattering operators are used to define the {\it{fractional GJMS operators}} 
$$
P_\gamma=P_\gamma [g_+,h]:C^\infty(N)\to C^\infty(N),\:\:h=\overline g|_N,\:\:\overline g=\rho^2g_+,
$$ 
as follows. Let $\gamma\in(0,n/2)\backslash \mathbb N$ and assume that $s(n-s)$ does not belong to the point spectrum of $-\Delta_{g_+}$, where $s=\gamma+n/2$. In particular, $s$ satisfies the conditions \eqref{eq:s}. Define 
\begin{equation}\label{eq:P:d}
P_\gamma=d_\gamma S(s),\:\:d_\gamma=2^{2\gamma}\frac{\Gamma(\gamma)}{\Gamma(-\gamma)}.
\end{equation}
The operators $P_\gamma$ so defined are formally self-adjoint and have symbol $|\xi|^{2\gamma}$. They extend, to fractional values of $\gamma$, the operators discovered by Graham, Jenne, Mason and Sparling in \cite{GJMS} for $\gamma\in\mathbb N$. The following conformal properties hold:
$$
P_\gamma[g_+,w^{\frac{4}{n-2\gamma}}h](u)=w^{-\frac{n+2\gamma}{n-2\gamma}}P_\gamma[g_+,h](wu).
$$
Due to further application to the fractional Yamabe problem and for simplicity of statements, we will restrict our analysis in this section to the case $\gamma\in (0,1)$. 

Chang and Gonzalez found in \cite{chang-gonzalez} an alternate way of describing the operators $P_\gamma$ in terms of an extension degenerate elliptic problem, generalizing a result by Caffarelli and Silvestre in \cite{caffarelli-silvestre}. 
Equation \eqref{eq:poisson} is equivalently written in terms of the compactified metric $\overline g=\rho^2g_+$ on $\overline X$ as
$$
-\text{div}_{\overline g}(\rho^m\nabla_{\overline g}U)+E(\rho)U=0,\:\:\text{in}\:\:X,
$$
where $m=1-2\gamma=n+1-2s$, $U=\rho^{s-n}u$ and 
$$
E(\rho)=-\rho^{\frac{m}{2}}\Delta_{\overline g}\rho^{m/2}+\frac{1}{4}m(m-2)\rho^{m-2}+\frac{n-1}{4n}R_{\overline g}\rho^m.
$$
The fractional GJMS operator is expressed as 
$$
P_\gamma \left(U|_N\right)=\frac{d_\gamma}{2\gamma}\lim_{\rho\to 0}(\rho^m\partial_\rho U).
$$

Now we turn to the fractional Yamabe problem, proposed in \cite{gonzalez-qing}.  
In analogy with the scalar curvature, the {\it{fractional scalar curvature}} is defined by  $P_\gamma(1)$.
The fractional Yamabe problem consists in finding a positive solution to the non-local non-linear problem 
$$
P_\gamma w = c w^{\frac{n+2\gamma}{n-2\gamma}},\:\:\text{on}\:\: N,
$$
for some $c\in\mathbb R$. The seeking solution $w>0$ represents a conformal metric $w^{\frac{4}{n-2\gamma}}g$ with constant fractional scalar curvature and the limit case $\gamma=1$ becomes the classical Yamabe problem. The formulation of the fractional Yamabe problem in terms of the extension problem  
$$
\begin{cases}
-\text{div}_{\overline g}(\rho^m\nabla_{\overline g}U)+E(\rho)U=0,\:\:\text{in}\:\:X,
\\
\lim_{\rho\to 0}(\rho^m\partial_\rho U)=cU^{\frac{n+2\gamma}{n-2\gamma}}, \:\:\text{on}\:\: N,
\end{cases}
$$
where $c\in \mathbb R$, makes a connection with the Escobar-Yamabe problem in \cite{escobar2} which stands for the case $\gamma=1/2$ in this setting.

The fractional Yamabe problem admits a variational formulation for the extension problem in terms of the functional
$$
U\mapsto 
\frac{\int_X \left( \rho^{1-2\gamma}|\nabla_{\overline g}U|^2_{\overline g}+E(\rho)U^2\right)dvol_{\overline g}}
{\left(\int_{N} |U|^{\frac{2n}{n-2\gamma}}d\sigma_{\overline g}\right)^{\frac{n-2\gamma}{n}}}
$$
whose infimum is the conformal invariant denoted by $\Lambda^\gamma(N,[\overline g|_N])$. An Aubin type existence result holds for this setting (see \cite[Proposition B]{kim-musso-wei}). 
It gives the existence of a minimizing solution when $\Lambda^\gamma(N,[\overline g|_N])$ is finite and strictly less than the one for the standard hemisphere.

The existence of a Green's function for this setting is proved by Kim, Musso and Wei in \cite[Proposition 1.5]{kim-musso-wei}. We present here the (slightly more restrictive) statement for C.C.E. manifolds with geodesic defining functions.

\begin{proposition}[\cite{kim-musso-wei}]\label{propo:green}
Suppose that $\Lambda^\gamma(N,[\overline g|_N])>0$ and the first $L^2$-eigenvalue of the Laplace-Beltrami operator satisfies 
$$
\lambda_1(-\Delta_{g_+})>\frac{(n-1)^2}{4}-\gamma^2.
$$ 
Then for each $p\in N$ there exists a unique positive Green's function $G_p$ on $\overline X\backslash \{p\}$ which satisfies 
\begin{equation}\label{system:green}
\begin{cases}
-\text{div}_{\overline g}(\rho^m\nabla_{\overline g}G_p)+E(\rho)G_p=0,\:\:\text{in}\:\: X,
\\
\lim_{\rho\to 0}(\rho^m\partial_\rho G_p)=\delta_p, \:\:\text{on}\:\: N,
\end{cases}
\end{equation}
in the distributions sense, where $\delta_p$ is the Dirac measure.
\end{proposition}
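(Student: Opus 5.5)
The plan is the standard one for Green's functions of coercive degenerate elliptic problems: first solve the problem with smooth boundary data by a variational argument, then build $G_p$ as a fundamental-solution parametrix plus a regular correction, and finally get positivity and uniqueness from coercivity and a maximum principle. I write $\gamma=(1-m)/2\in(0,1)$ and work in the weighted space $W^{1,2}(\overline X,\rho^m)$, the completion of $C^\infty(\overline X)$ under $\int_X\rho^m(|\nabla_{\overline g}U|^2+U^2)\,dvol_{\overline g}$. Its traces on $N$ lie in $H^\gamma(N)$, and by the weighted trace and Sobolev inequalities they lie in $L^{2n/(n-2\gamma)}(N)$.

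\emph{Step 1: coercivity.} Let
$$
\mathcal E(U)=\int_X\big(\rho^{m}|\nabla_{\overline g}U|^2+E(\rho)U^2\big)\,dvol_{\overline g}.
$$
Using the Chang--Gonzalez dictionary $U=\rho^{s-n}u$, $\mathcal E(U)$ can be rewritten as the $g_+$-energy
$$
\int_X\big(|\nabla_{g_+}u|^2-s(n-s)u^2\big)\,dvol_{g_+}
$$
plus a boundary term involving $P_\gamma$ of the trace. There are two hypotheses to use here.
\begin{itemize}
\item The condition $\lambda_1(-\Delta_{g_+})>\frac{(n-1)^2}{4}-\gamma^2$ is exactly the statement that $s(n-s)$ with $s=\frac n2+\gamma$ lies strictly below the bottom of the spectrum. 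Hence the Poisson problem \eqref{eq:poisson} is uniquely solvable, the scattering operator is defined, and the $g_+$-energy is positive on functions with zero trace.
\item The condition $\Lambda^\gamma(N,[\overline g|_N])>0$ controls the part of $\mathcal E$ carried by the trace.
\end{itemize}
Combining the two, I will show $\mathcal E(U)\ge c\|U\|^2_{W^{1,2}(\rho^m)}$. I would argue by contradiction with a normalized minimizing sequence: compactness of the weighted embedding into $L^2(\rho^m)$ gives a limit, which is ruled out by one of the two positivity statements according to whether its trace vanishes.

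By Lax--Milgram, for every $f\in H^{-\gamma}(N)$ there is then a unique weak solution of
$$
-\mathrm{div}(\rho^m\nabla U)+E(\rho)U=0\ \text{in } X,\qquad \lim_{\rho\to0}\rho^m\partial_\rho U=f\ \text{on } N.
$$

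\emph{Step 2: construction near $p$.} Take Fermi coordinates at $p$ built from the geodesic defining function, so that $\overline g=\delta+O(|x|)$ near $p$ and $N$ is totally geodesic. The model solution is the weighted half-space fundamental solution $G_0=c_{n,\gamma}|x|^{2\gamma-n}$. It satisfies $\mathrm{div}(x_{n+1}^m\nabla G_0)=0$ and $\lim x_{n+1}^m\partial_{n+1}G_0=\delta_0$, with $c_{n,\gamma}$ fixed by the flux through small hemispheres.

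Set $G_p=\chi G_0+R$ with $\chi$ a cutoff. The error $F=L(\chi G_0)$ consists of two pieces: the interior term $E(\rho)\chi G_0$, and the corrections coming from $\overline g-\delta=O(|x|)$, which are $O(\rho^m|x|^{2\gamma-n-1})$. Since the boundary is totally geodesic and $h_\rho$ is even, the interior part of $E(\rho)$ is not of order $\rho^{m-2}$ but only $O(\rho^m)$, and both pieces are integrable against $\rho^m$ near $p$. So $F$ lies in the dual of $W^{1,2}(\rho^m)$ (for $n$ small) or can be handled by iterating the parametrix once more (for $n$ large). Step 1 then produces the correction $R$.

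\emph{Step 3: positivity and uniqueness.} Uniqueness follows from coercivity: the difference of two Green's functions solves the homogeneous problem with zero Neumann data, so it lies in $W^{1,2}(\rho^m)$ and vanishes. For positivity, note that $G_p>0$ near $p$, and test the equation with the negative part $G_p^-$; coercivity gives $\mathcal E(G_p^-)\le0$, hence $G_p^-=0$. Strict positivity then follows from the strong maximum principle and the Hopf lemma for $A_2$-weighted operators (Fabes--Kenig--Serapioni type).

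The main obstacle is Step 1, specifically transferring the two spectral hypotheses into coercivity of $\mathcal E$ on the compactified side, where $E(\rho)$ is singular at $N$. I would first try the conformal change back to $g_+$ together with an integration by parts near $N$ using the Graham normal form, and check carefully that the boundary terms are exactly $\frac{2\gamma}{d_\gamma}\int_N UP_\gamma U$.
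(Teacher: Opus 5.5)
The paper does not prove this proposition; it quotes it from Kim--Musso--Wei. So your outline is measured here against what a complete argument needs, not against an in-paper proof.

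\textbf{Step 1 is fine, and is the easy part.} For a geodesic defining function on a Poincar\'e--Einstein manifold, $E(\rho)=\frac{n-2\gamma}{4n}R_{\overline g}\rho^m$ near $N$, so $E$ is not singular. For zero-trace $U$, the conformal identity gives $\mathcal E(U)\ge(\lambda_1-s(n-s))\int_X\rho^{m-2}U^2\,dvol_{\overline g}$. Together with $|E|\le C\rho^m$, this yields coercivity on that subspace, and your compactness argument then closes.

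\textbf{Two normalizations are off.}
\begin{itemize}
\item $s(n-s)=\frac{n^2}{4}-\gamma^2$, so the printed bound $\frac{(n-1)^2}{4}-\gamma^2$ is not ``exactly'' the spectral-gap condition. It is presumably a misprint for the $\frac{n^2}{4}-\gamma^2$ of Kim--Musso--Wei, which is the condition your Step~1 actually uses.
\item For $c>0$, the weighted flux of $c|x|^{2\gamma-n}$ through small hemispheres is $-c(n-2\gamma)\sigma_{n,m}$. Hence $\lim x_{n+1}^m\partial_{n+1}(c|x|^{2\gamma-n})$ is a \emph{negative} multiple of $\delta_0$. Normalizing it to $+\delta_0$, as you do, forces $c_{n,\gamma}<0$, which contradicts your use of $G_p>0$ near $p$ in Step~3. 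The positive Green's function is the one with $-\lim_{\rho\to0}\rho^m\partial_\rho G_p=\delta_p$ (outward conormal), and you should run the argument with that convention.
\end{itemize}

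\textbf{The genuine gap is Step~2, where the real work lies.}
\begin{itemize}
\item By scaling against the weighted Hardy inequality, an error of size $O(\rho^m|x|^{2\gamma-n-1})$ in general lies in $(W^{1,2}(\rho^m))^*$ only when $n<2+2\gamma$. Using $\overline g-\delta=O(|x|^2)$ in Fermi coordinates improves this only to $n<4+2\gamma$.
\item Each parametrix correction gains one power of $|x|$, so ``iterating once more'' does not reach general $n$. You need on the order of $n/2$ corrections. Each requires solving $-\mathrm{div}(x_{n+1}^m\nabla G_j)=F_j$ on $\mathbb R^{n+1}_+$ with homogeneous $F_j$ and zero weighted Neumann data. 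That needs an indicial analysis of the weighted operator on the hemisphere, which you do not supply.
\item Positivity near $p$ needs $R=o(|x|^{2\gamma-n})$ at $p$, and $R\in W^{1,2}(\rho^m)$ alone does not give this.
\end{itemize}
A route that works in all dimensions is to drop the parametrix. Solve with mollified data $f_k\to\delta_p$, then reflect evenly across $N$ in the normal form $\overline g=d\rho^2+h_\rho$. There are no cross terms, so this gives an interior equation with the $A_2$ weight $|\rho|^m$. Then use Fabes--Kenig--Serapioni estimates and Gr\"uter--Widman-type bounds for $A_2$-weighted operators to pass to the limit: uniform weighted $W^{1,q}$ bounds for $q<\frac{n+1+m}{n+m}$, and $G\approx d^{2\gamma-n}$.

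\textbf{Uniqueness is not automatic.} The difference of two distributional Green's functions may a priori be singular at $p$. You must either prove it lies in $W^{1,2}(\rho^m)$, via a removable-singularity argument (for instance by duality against smooth solutions of the Neumann problem), or state uniqueness within a class such as $G_p=O(d(\cdot,p)^{2\gamma-n})$.
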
 

Inspired by Schoen's argument for the classical Yamabe problem in \cite{schoen}, 
Green's functions are used in the construction of test functions aimed to prove a Aubin inequality on the fractional Yamabe problem. 

\begin{conjecture}[\cite{kim-musso-wei}]\label{conj:green}
Suppose that $\Lambda^\gamma(N,[\overline g|_N])>0$ and either $(N,[\overline g|_N])$ is locally conformally flat or $n=2$. Then $G_p$ has the asymptotic expansion
\begin{equation}\label{eq:green}
G_p(q)=c_{n,\gamma}d_{\overline g}(q,p)^{2\gamma-n}+A+O'(d_{\overline g}(q,p)^{\min\{1,2\gamma\}}),
\end{equation}
for $q\in\overline X$ near $p$, where $A\in\mathbb R$, $c_{n,\gamma}>0$ and $d_{\overline g}(\cdot,\cdot)$ is the distance function. Furthermore, $A\geq 0$ with equality holding if and only if $(\overline X,\overline g)$ is conformally diffeomorphic to the standard unit ball in $\mathbb R^{n+1}$. 
\end{conjecture}

That conjecture is analogous to the positive mass theorem by Schoen and Yau for locally flat manifolds established in \cite{schoen-yau2}. In that case, there is a connection of the term $A$ in the Green's function expansion and the ADM mass of the asymptotically flat manifold obtained by the generalized stereographic projection which was done by Schoen in the solution of the Yamabe problem in \cite{schoen}. 
Kim, Musso and Wei proved that if Conjecture \ref{conj:green} is valid then a Aubin inequality holds when $N$ is locally conformally flat so that the fractional Yamabe problem is solvable in this case. The expansion \eqref{eq:green} was obtained by Mayer and Ndiaye in \cite{mayer-ndiaye1} (without determining the sign of the term $A$), where they obtain $C^{2,\alpha}$ regularity for $G_p$ away from the point $p$. 
A proof of the Yamabe problem in the case of locally flat conformal infinity (without the sign hypothesis on $A$) was presented in \cite{mayer-ndiaye2} using Bahri-Coron methods. The authors also define the "fractional mass map" which takes each point $p\in N$ to the corresponding constant $A$ (see \cite[Definition 4.3]{mayer-ndiaye2}).     

In this paper, we give a geometrical interpretation for the term $A$ in the Green's function expansion \eqref{eq:green} by analogy with the ADM mass in General Relativity.
This will be done in Section \ref{sec:mass:def} by making use of the concept of smooth measure metric spaces.

%%%%%%%%%%%%%%%%%%%%%%%%%%%%%%%%%%%%%%%%%%%%%%%%%%%%%%%%%%%%%%%%%%%%%%%%%%%%%%%%%%%

%%%%%%%%%%%%%%%%%%%%%%%%%%%%%%%%%%%%%%%%%%%%%%%

\section{Smooth metric measure spaces}\label{sec:smms}

Following the definitions in \cite{case, case-chang}, a {\it{smooth metric measure space}}, or SMMS for short, is a four-tuple $(M,g,v^m dvol_g, \mu)$, 
where $M$ is a $(n+1)$-dimensional manifold with boundary $\Sigma$, $g$ is a Riemannian metric on $M$ with volume element $dvol_g$, 
$v$ is a non-negative smooth function on $M$ satisfying $v^{-1}(0)=\Sigma$, $m\in\mathbb R\cup\{\pm\infty\}$ and $\mu\in\mathbb R$.
We will not require here $M$ to be compact. Actually, our model example in the next section will be 
$$
\mathbb R^{n+1}_+=\{(x_1,...,x_{n+1})\in\mathbb R^{n+1}\: | \: x_{n+1}\geq 0\}
$$
with the Euclidean metric $\delta$, and with $v$ being given by the coordinate function $x_{n+1}$. We will restrict our analysis to the case $m\in (1-n,\infty)$.  
%In many situations, it will be convenient to write the measure $v^m dvol_g$ as $\text{e}^{-\phi}dvol_g$, where $\phi$ is a smooth function on the interior of $M$.
%For consistence with some examples in Section \ref{sec:prel}, we will also allow the boundary $\Sigma$ to be empty.

As observed by Case and Chang in \cite{case-chang}, formally one can think of a SMMS $(M, g, v^m dvol_g, \mu)$ as the base of the warped product $(M\times F_\mu^m, g\oplus v^2 g_F)$ for $(F_\mu^m, g_F)$ the simply-connected $m$-dimensional space-form with Ricci curvature $\text{Ric}(g_F)=\mu g_F$. Geometric invariants  on the warped product when $m\in\mathbb N$ are restricted to the base and this construction is extended to general $m$ by treating $m$ as a formal parameter. This process is the one used to obtain the expression \eqref{eq:R} below for the weighted scalar curvature $R_{(g,v,m)}$ which plays a major role in our work.

\vspace{0.2cm}
\noindent
{\bf{Agreement.}}
Throughout the paper, we will restrict our analysis to the case $\mu=m-1$ and consider SMMS  $(M,g,v^m dvol_g, m-1)$ denoting them as triples $(M,g,v^m dvol_g)$ for short. 
\vspace{0.2cm}

If $R_g$ is the scalar curvature of $(M,g)$, the {\it{weighted scalar curvature}} of the SMMS $(M,g,v^m dvol_g)$ is given by the expression
\begin{equation}\label{eq:R}
R_{(g,v,m)}=R_g-2mv^{-1}\Delta_g v -m(m-1)v^{-2}|\nabla_g v|_g^2+m(m-1) v^{-2}.
\end{equation}
This quantity is defined in the interior of $M$, where $v>0$.
%For small $\epsilon\geq 0$, we define the hypersurface $\Sigma_\epsilon=v^{-1}(\epsilon)$ so that $\Sigma_0=\Sigma$ is the boundary of $M$.
%If $x\in\Sigma_\epsilon$,  we denote by $\eta(x)$ the outwards pointing unit normal vector to $\Sigma_\epsilon$ at $x$, and by $H_g(x)=\text{div}_g\eta(x)$ the mean curvature of  $\Sigma_\epsilon$ at $x$. The {\it{weighted mean curvature}} (see \cite{gromov}) of $\Sigma_\epsilon$ is $H_{(g,v,m)}=H_g+mv^{-1}\eta(v)$, $\epsilon>0$. In this paper, we will make use of the following {\it{modified weighted mean curvature}} of $\Sigma_\epsilon$, for $\epsilon>0$, 
The {\it{weighted mean curvature}} (see \cite{gromov}) of a given hypersurface is $$H_{(g,v,m)}=H_g+mv^{-1}\eta(v),$$ where $\eta$ is the outward pointing unit normal vector. In this paper, we will make use of the following {\it{modified weighted mean curvature}}
\begin{equation}\label{eq:H}
\mathcal H_{(g,v,m)}=H_g+mv^{-1}\eta(v)+mv^{-1}.
\end{equation}

\begin{remark}
The term $v^{-1}$ is not well defined in the case where the given hypersurface is the boundary $\Sigma=v^{-1}(0)$. However, when this quantity is used below in Section \ref{sec:interpr:mass}, we will assume $\nabla_gv=-\eta$ so that $\mathcal H_{(g,v,m)}$ becomes $H_g$. This holds for our model  $(\mathbb R^n_+, \delta, x_{n+1}^m dx)$ which serves as the underlying SMMS in our construction. Nevertheless, the complete expression \eqref{eq:H}  should be preserved since we will be interested in variations of $\mathcal H_{(g,v,m)}$ with respect to $g$ and $v$.
\end{remark}

%\begin{remark}
%It is of interest the case when $\nabla_gv=-\eta$ on $\Sigma_\epsilon$ as the model $$(\mathbb R^n_+, \delta, x_{n+1}^m dx)$$ satisfies this condition. In this case, $\mathcal H_{(g,v,m)}=\mathcal H_{(\delta, x_{n+1}, m)}=H_\delta=0$ on $\Sigma_\epsilon$. 
%The extra-term $mv^{-1}$, added to the weighted mean curvature, is necessary for the calculations in Section \ref{sec:interpr:mass} and Appendix A (where the condition $\nabla_gv=-\eta$ on $\Sigma_\epsilon$ is used to ensure a certain cancellation).
%\end{remark}

The {\it{weighted conformal Laplacian}} (see \cite{case}) is defined by the expression
\begin{equation}\label{eq:Lv}
L_{(g,v,m)}w=-\Delta_g w+mv^{-1}g(\nabla_g v, \nabla_g w)+\frac{m+n-1}{4(m+n)}R_{(g,v,m)}w
\end{equation}
where $w$ is a smooth function on $M$ and $R_{(g,v,m)}$ is the weighted conformal scalar curvature defined by equation \eqref{eq:R}.
It will also be interesting to define the {\it{weighted boundary conformal operator}}
\begin{equation}\label{eq:Bv}
B_{(g,v,m)}=-\frac{\partial}{\partial \eta}-\frac{m+n-1}{2(m+n)}\mathcal H_{(g,v,m)}
\end{equation}
where $\mathcal H_{(g,v,m)}$ is the modified weighted mean curvature defined by equation \eqref{eq:H}.
%This operator will not be used in the paper but it is included in here for potential future applications.

We say that another SMMS $(M,\hat g, \hat v^mdvol_{\hat g})$ is conformally equivalent to $(M, g, v^mdvol_g)$ if there is a positive smooth function $u$ on $M$ satisfying $\hat g=u^{-2}g$ and $\hat v=u^{-1}v$. In this case, the following equations are direct computations:
\begin{equation}\label{eq:conf:L}
L_{(\hat g, \hat v,m)}(w)=u^{\frac{m+n+3}{2}}L_{(g,v,m)}(u^{-\frac{m+n-1}{2}}w),
\end{equation}
\begin{equation}\label{eq:conf:B}
B_{(\hat g, \hat v,m)}(w)=u^{\frac{m+n+1}{2}}B_{(g,v,m)}(u^{-\frac{m+n-1}{2}}w),
\end{equation}
for any smooth function $w$ on $M$. Formula \eqref{eq:conf:L} is \cite[Equation (3.1)]{case}, while \eqref{eq:conf:B} can be verified using \cite[Equation (1.8)]{escobar2}.

Let $(X,g_+)$ be a C.C.E. manifold with conformal infinity $N$ and geodesic defining function $\rho$. If $(\overline X,\overline g=\rho^2g_+)$ is the compactification of $(X,g_+)$, the SMMS $(X,g_+,1^mdvol_{g+})$ is conformally equivalent to $(\overline X,\overline g,\rho^mdvol_{\overline g})$. The formulation of the fractional Yamabe problem in terms of smooth metric measure spaces was done in \cite{case-chang} as follows.
Set $m=1-2\gamma$. According to \cite[Theorem 4.1]{case-chang}, given $f\in C^\infty(N)$, a function $U$ is a solution of 
\begin{equation}\label{eq:L:U}
\begin{cases}
L_{(\overline g,\rho,m)}U=0, &\text{in}\:X,
\\
U=f, &\text{on}\:N,
\end{cases}
\end{equation}
if and only if $u=\rho^{n-s}U$ is the solution of equation \eqref{eq:poisson} of the form \eqref{eq:form:u} with $s=\gamma+n/2$. 
Moreover, $U$ satisfies 
$$
P_\gamma f=\frac{d_\gamma}{2\gamma}\lim_{\rho\to 0}\left(\rho^m\frac{\partial U}{\partial \rho}\right),
$$ 
for $d_\gamma$ as in \eqref{eq:P:d}.
Indeed,  
\begin{equation}\label{eq:L:div}
\rho^{m}L_{(\overline g, \rho,m)}U=-\text{div}_{\overline g}(\rho^m\nabla_{\overline g}U)+E(\rho)U.
\end{equation}
The fractional Yamabe problem reads as
$$
\begin{cases}
L_{(\overline g,\rho,m)}U=0,\:\:\text{in}\:\:X,
\\
\lim_{\rho\to 0}(\rho^m\partial_\rho U)=cU^{\frac{n+2\gamma}{n-2\gamma}}, \:\:\text{on}\:\: N.
\end{cases}
$$

%%%%%%%%%%%%%%%%%%%%%%%%%%%%%%%%%%%%%%%%%%%%%%%%%%%%%%%%%%%%%%%%%%%%%%%%%%%%%%%%%

\section{The mass definition}\label{sec:mass:def}

In this section, we use SMMS to define the mass-type invariant mentioned in the introduction section. This extends the mass in \cite{almaraz-barbosa-lima} and gives a geometrical interpretation for the constant $A$ in equation \eqref{eq:green:0} (or \eqref{eq:green}).

%In this section, we will extend the geometric mass-type invariant studied in \cite{almaraz-barbosa-lima} to SMMS asymptotically modelled on $(\mathbb R^{n+1}_+,\delta,x_{n+1}^m dx,\mu)$ in such a way that the one in \cite{almaraz-barbosa-lima} corresponds to the case $m=0$. These invariants are strongly related to the classical ADM mass on asymptotically flat manifolds defined by Arnowitt, Deser and Misner in \cite{arnowitt-deser-misner}. As discussed below, the mass invariant defined here for SMMS gives some geometrical meaning for the key constant term $A$ appearing in the Green's function expansion in Conjecture \ref{conj:green}.

%We will restrict our analysis to the case $\mu=m-1$ and consider SMMS  $(M,g,v^m dvol_g, m-1)$ denoting them as triples $(M,g,v^m dvol_g)$ for short. 

\begin{definition}\label{def:as:flat}
Let $(M,g,v^m dvol_g)$ be a SMMS. Suppose that there is a compact set $K\subset M$ and a diffeomorphism 
$$
\phi:M\backslash K\to \{x\in\mathbb R^{n+1}_+\:|\:|x|> 1\}
$$
such that, in the induced coordinates $x=(x_1,...,x_{n+1})$, 
there is $0<s<1$ such that, 
\begin{equation}\label{eq:can:exp}
		(\phi_*g)_{ij}(x)=\left(1+a|x|^{1-n-m}\right)\delta_{ij}+O'(|x|^{s-n-m}),\quad i,j=1,...,n+1,
\end{equation} and 
\begin{equation}\label{eq:can:exp:v}
		v\circ\phi^{-1}(x)=\left(1+\displaystyle\frac{a}{2}|x|^{1-n-m} \right)x_{n+1}+x_{n+1}O'(|x|^{s-n-m}), 
\end{equation}
for some constant $a$ and any  $x\in M\backslash K$. In this case, we say that 
$(M,g,v^m dvol_g)$ is an asymptotically Schwarzschild SMMS with a non-compact boundary and denote it by ASSMMS for short.
\end{definition}

\begin{remark}
The expansions in Definition~\ref{def:as:flat} are equivalent to 
\begin{align*}
	\begin{cases}
		(\phi_*g)_{ij}(x)=\left(1+\displaystyle\frac{(m+n-1)a}{4}|x|^{1-n-m}\right)^{\frac{4}{m+n-1}}\delta_{ij}+O'(|x|^{s-n-m}),
		\\
		v\circ\phi^{-1}(x)=\left(1+\displaystyle\frac{(m+n-1)a}{4}|x|^{1-n-m} \right)^{\frac{2}{m+n-1}}x_{n+1}+x_{n+1}O'(|x|^{s-n-m}).
	\end{cases}
\end{align*}
\end{remark}

\begin{definition}\label{def:mass}
The {\rm{mass}} of an ASSMMS $(M,g,v^m dvol_g)$ is 
\begin{align}\label{eq:mass}
\mathfrak m_{(g,v,m)}=\lim_{\rho\to\infty}\Big \{
&\int_{|x|=\rho, x_{n+1}>0}\sum_{i=1}^{n+1}\mathbb U_i(\phi_*g-\delta, v\circ\phi^{-1}-x_{n+1}) \frac{x_i}{|x|}x_{n+1}^md\sigma
\\
&+\int_{|x|=\rho, x_{n+1}= 0}\sum_{\alpha=1}^{n}\overline {\mathbb U}_\alpha(\phi_*g-\delta, v\circ\phi^{-1}-x_{n+1}) \frac{x_\alpha}{|x|}x_{n+1}^m d\overline \sigma
\Big \},\notag
\end{align}
where 
$$
\mathbb U(e,w)=\text{div}_\delta e-d\text{tr}_\delta e+mx_{n+1}^{-1}\nabla_\delta x_{n+1}\righthalfcup e-2mx_{n+1}^{-1} dw
$$ 
and $\overline {\mathbb U}(e,w)=\nabla_\delta x_{n+1}\righthalfcup e$ are the {\rm{charge integrands}} one-forms, and $d\sigma$ and $d\overline\sigma$ are Euclidean area elements.
\end{definition}

First let us calculate the mass quantity in view of the asymptotic expansion in Definition~\ref{def:as:flat}:
\begin{proposition}\label{lemma:mass}
    If  $0\leq m<1$, or $-1<m<0$ with $O'(|x|^{s-n-m})=O'(|x|^{s-n-m})\delta_{ij}+x_{n+1}O'(|x|^{s-n-m-1})$ in \eqref{eq:can:exp}, then
	$$
	\mathfrak m_{(g,v,m)}=(m+n)(m+n-1)a \sigma_{n,m},
	$$
	where 
	$$
	\sigma_{n,m}=\int_{|x|=1, x_{n+1}> 0}x_{n+1}^m d\sigma.
	$$
\end{proposition}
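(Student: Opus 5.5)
The plan is a direct computation in the asymptotic coordinates. Write $r=|x|$ and $k=n+m-1$. Decompose
$$
e=\phi_*g-\delta=a r^{-k}\delta+E,\qquad w=v\circ\phi^{-1}-x_{n+1}=\tfrac a2 r^{-k}x_{n+1}+x_{n+1}h,
$$
where $E=O'(r^{s-n-m})$ and $h=O'(r^{s-n-m})$. Since $\mathbb U$ and $\overline{\mathbb U}$ are linear in $(e,w)$, I will treat the leading Schwarzschild part and the error part separately.

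\textbf{Leading part.} Set $f=ar^{-k}$. Then $\text{div}_\delta(f\delta)-d\,\text{tr}_\delta(f\delta)=df-(n+1)df=-n\,df$, and $mx_{n+1}^{-1}\nabla_\delta x_{n+1}\righthalfcup(f\delta)=mx_{n+1}^{-1}f\,dx_{n+1}$. Since $d(\tfrac f2 x_{n+1})=\tfrac{x_{n+1}}2df+\tfrac f2dx_{n+1}$, the term $-2mx_{n+1}^{-1}dw$ contributes $-m\,df-mx_{n+1}^{-1}f\,dx_{n+1}$. The singular terms cancel exactly, leaving
$$
\mathbb U=-(m+n)\,df .
$$
Its radial component is $\sum_i \mathbb U_i\frac{x_i}{r}=(m+n)k\,a\,r^{-k-1}$. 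By scaling, $\int_{|x|=\rho,x_{n+1}>0}x_{n+1}^md\sigma=\rho^{n+m}\sigma_{n,m}$, which is finite since $m>-1$. The leading contribution is therefore
$$
(m+n)(m+n-1)a\,\sigma_{n,m}\,\rho^{\,n+m-k-1}=(m+n)(m+n-1)a\,\sigma_{n,m},
$$
independent of $\rho$. For the boundary term, $\overline{\mathbb U}_\alpha(f\delta)=f\delta_{n+1,\alpha}=0$ for $\alpha\le n$, so the leading part contributes nothing there.

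\textbf{Error part.} The regular terms are $\text{div}_\delta E-d\,\text{tr}_\delta E$ and the piece $-2m\,dh$ of $-2mx_{n+1}^{-1}d(x_{n+1}h)$. They are $O(r^{s-n-m-1})$, and integrating against $x_{n+1}^md\sigma$ over the hemisphere of radius $\rho$ gives $O(\rho^{s-1})\to0$.

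The potentially singular terms are $mx_{n+1}^{-1}E_{n+1,i}\frac{x_i}{r}$ and $-2mx_{n+1}^{-1}h\,dx_{n+1}$, and handling them is the main obstacle.
\begin{itemize}
\item In the $dx_{n+1}$ term and in the diagonal piece $i=n+1$, the radial factor $x_{n+1}/r$ cancels $x_{n+1}^{-1}$. These reduce to $O(r^{s-n-m-1})$ and are harmless.
\item The only dangerous piece is $mx_{n+1}^{-1}\sum_{\alpha\le n}E_{n+1,\alpha}x_\alpha/r$.
\begin{itemize}
\item For $m=0$ it vanishes because of the factor $m$.
\item For $0<m<1$, the weight becomes $x_{n+1}^{m-1}$, which is integrable on the hemisphere. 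After scaling, $\int_{|x|=1,\,x_{n+1}>0} x_{n+1}^{m-1}\,d\sigma$ is finite, and again one gets $O(\rho^{s-1})$.
\item For $-1<m<0$ the weight $x_{n+1}^{m-1}$ is not integrable, and this is exactly where the extra hypothesis enters. Off-diagonal entries of $E$ then have the form $x_{n+1}O'(r^{s-n-m-1})$, so $x_{n+1}^{-1}E_{n+1,\alpha}=O(r^{s-n-m-1})$, and the weight reverts to $x_{n+1}^m$, which is integrable since $m>-1$.
\end{itemize}
\end{itemize}
The derivative bounds in $O'$ are used only to control $\text{div}_\delta E$, $d\,\text{tr}_\delta E$ and $dh$.

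\textbf{Boundary integral.} By the leading-part computation, only $E_{n+1,\alpha}$ contributes.
\begin{itemize}
\item For $m>0$, the factor $x_{n+1}^m$ vanishes on $\{x_{n+1}=0\}$, so the integral is zero.
\item For $m=0$, the integrand is $O(\rho^{s-n-m})$ over an $(n-1)$-sphere of radius $\rho$, giving $O(\rho^{s-1})\to0$.
\item For $-1<m<0$, I read the integrand as the boundary limit of $x_{n+1}^mE_{n+1,\alpha}$. By hypothesis this is $x_{n+1}^{m+1}O(\cdot)$, which tends to $0$ since $m+1>0$.
\end{itemize}

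Summing the leading and error contributions and letting $\rho\to\infty$ gives $\mathfrak m_{(g,v,m)}=(m+n)(m+n-1)a\,\sigma_{n,m}$. The existence and finiteness of the limit in \eqref{eq:mass} follow along the way.
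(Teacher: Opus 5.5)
Your proposal is correct and follows the paper's proof. Both compute the radial component of $\mathbb U$ directly in the asymptotic coordinates. In both, the only singular error term, $O(x_{n+1}^{-1}|x|^{s-n-m})$, is controlled by finiteness of $\sigma_{n,m-1}$ when $0<m<1$, vanishes through the factor $m$ when $m=0$, and is removed by the extra hypothesis on the off-diagonal entries $e_{n+1\,\alpha}$ when $-1<m<0$; the boundary integral gets the same case analysis.

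The difference is organizational. You split off the Schwarzschild part and observe that its singular terms cancel exactly, giving $\mathbb U=-(m+n)\,d(a|x|^{1-n-m})$. The paper instead adds the radial contributions $n(m+n-1)a$, $ma$ and $m(m+n-2)a$ (each times $|x|^{-n-m}$) term by term.
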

\begin{proof} 
Set $e=\phi_*g-\delta$ and $w=v\circ\phi^{-1}-x_{n+1}$ outside a compact set of $M$. 	First suppose that $m>0$. Then
	$$e=a|x|^{1-n-m}\delta+O'(|x|^{s-n-m}),$$ and
	$$w=\frac{a}{2}|x|^{1-n-m} x_{n+1}+x_{n+1}O'(|x|^{s-n-m}).$$
	Observe that  
	$$\sum_{\alpha=1}^{n}\overline {\mathbb U}_\alpha(e, w)\frac{x_\alpha}{|x|}=O(|x|^{s-n-m})$$ 
	and 
	$$\sum_{i=1}^{n+1}\mathbb U_i(e, w) \frac{x_i}{|x|}=\sum_{i,j=1}^{n+1}(e_{ij,j}-e_{jj,i}+mx_{n+1}^{-1}(e_{n+1\,i}-2w_{,i}))\frac{x_i}{|x|}+O(|x|^{s-n-m-1}).$$
	Direct calculations show that
	$$\sum_{i,j=1}^{n+1}e_{ij,j}\frac{x_i}{|x|}=(1-m-n)a|x|^{-n-m}+O(|x|^{s-n-m-1}),$$
	and 
	$$\sum_{i,j=1}^{n+1}e_{jj,i}\frac{x_i}{|x|}=(n+1)(1-m-n)a|x|^{-n-m}+O(|x|^{s-n-m-1})$$
	so that
	$$\sum_{i,j=1}^{n+1}(e_{ij,j}-e_{jj,i})\frac{x_i}{|x|}=n(m+n-1)a|x|^{-n-m}+O(|x|^{s-n-m-1}).$$
	On the other hand, 
	\begin{equation}\label{eq:mass:1}
	\sum_{i=1}^{n+1}mx_{n+1}^{-1}e_{n+1\,i}\frac{x_i}{|x|}=ma|x|^{-n-m}+O(x_{n+1}^{-1}|x|^{s-n-m}),
	\end{equation}
	and 
	\begin{equation}\label{eq:mass:2}
	\sum_{i=1}^{n+1}mx_{n+1}^{-1}(-2w_{,i})\frac{x_i}{|x|}=m(m+n-2)a|x|^{-n-m}+O(|x|^{s-n-m-1}).
	\end{equation}
	Adding all these together we have
	\begin{equation}\label{eq:mass:3}
	\sum_{i=1}^{n+1}\mathbb U_i(e, w) \frac{x_i}{|x|}=(m+n)(m+n-1)a|x|^{-n-m}+O(|x|^{s-n-m-1})+O(x_{n+1}^{-1}|x|^{s-n-m}).
	\end{equation}
	Moreover
$$
\int_{|x|=\rho, x_{n+1}=0}\sum_{\alpha=1}^{n}\overline {\mathbb U}_\alpha(e, w)\frac{x_\alpha}{|x|}x_{n+1}^m d\overline\sigma=0
$$  	
so that 
$$
\mathfrak m_{(g,v,m)}=\lim_{\rho\to\infty}\left((m+n)(m+n-1)a \sigma_{n,m}+\sigma_{n,m-1}O(\rho^{s-1})+\sigma_{n,m}O(\rho^{s-1})\right).
$$
The result follows as $s<1$ and $\sigma_{n,m}$ and $\sigma_{n,m-1}$ are finite integrals because $m>0$.

If $m=0$ the left sides of \eqref{eq:mass:1} and \eqref{eq:mass:2} vanish and equation \eqref{eq:mass:3} simplifies to 
\begin{equation*}
	\sum_{i=1}^{n+1}\mathbb U_i(e, w) \frac{x_i}{|x|}=n(n-1)a|x|^{-n}+O(|x|^{s-n-1}).
\end{equation*}
The result now follows similarly as above by observing that 
$$\sum_{\alpha=1}^{n}\overline {\mathbb U}_\alpha(e, w)\frac{x_\alpha}{|x|}=O(|x|^{s-n}).$$ 
Finally, when $-1<m<0$, by the assumption we have 
	$$e=a|x|^{1-n-m}\delta+O'(|x|^{s-n-m})\delta+x_{n+1}O'(|x|^{s-n-m-1}),$$ and
	$$w=\frac{a}{2}|x|^{1-n-m} x_{n+1}+x_{n+1}O'(|x|^{s-n-m}).$$
One can calculate similarly as the case $0<m<1$ to obtain
$$
	\sum_{i=1}^{n+1}\mathbb U_i(e, w) \frac{x_i}{|x|}=(m+n)(m+n-1)a|x|^{-n-m}+O(|x|^{s-n-m-1}).$$
and
$$
\sum_{\alpha=1}^{n}\overline {\mathbb U}_\alpha(e, w)\frac{x_\alpha}{|x|}x_{n+1}^m=0, \:\: \text{on} \:\: x_{n+1}=0.
$$  	
Once again the result follows.

\end{proof}

This is the first part of Theorem \ref{thm:main}.
Its second part comes from the following:
\begin{theorem}\label{thm:geom:inv}
If  $0\leq m<1$, or $-1<m<0$ with $O'(|x|^{s-n-m})=O'(|x|^{s-n-m})\delta_{ij}+x_{n+1}O'(|x|^{s-n-m-1})$ in \eqref{eq:can:exp}, the quantity $\mathfrak m_{(g,v,m)}$ in \eqref{eq:mass}  is independent of the chosen asymptotic coordinates in the following sense: if $y=(y_1,...,y_{n+1})$ is another set of asymptotic coordinates coming from a diffeomorphism $\widetilde\phi$ such that 
	\begin{equation}\label{eq:exp:y}
		(\widetilde\phi_*g)_{ij}(y)=\left(1+\widetilde a|y|^{1-n-m}\right)\delta_{ij}+O'(|y|^{s-n-m}),
	\end{equation}
	 and 
	\begin{equation}\label{eq:exp:y:v}
		v\circ\widetilde\phi^{-1}(y)=\left(1+\displaystyle\frac{\widetilde a}{2}|y|^{1-n-m} \right)y_{n+1}+y_{n+1}O'(|y|^{s-n-m}),
\end{equation}
with $O'(|y|^{s-n-m})=O'(|y|^{s-n-m})\delta_{ij}+y_{n+1}O'(|y|^{s-n-m-1})$ in~\eqref{eq:exp:y} when $-1<m<0$,
then $\widetilde a=a$. % so that  $\mathfrak m_{(g,v,m)}$ remains unchanged.
\end{theorem}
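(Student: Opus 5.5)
The approach is to study the transition map $\Psi=\widetilde\phi\circ\phi^{-1}$, defined for large $|x|$, and show that it is asymptotically a Euclidean isometry of $\mathbb R^{n+1}_+$ up to errors small enough that the coefficient of $|x|^{1-n-m}$ cannot change. A key simplification is that the weight $v$ is a scalar. Writing $y=\Psi(x)$, the two expansions \eqref{eq:can:exp:v} and \eqref{eq:exp:y:v} give
$$
\Big(1+\tfrac a2|x|^{1-n-m}\Big)x_{n+1}+x_{n+1}O'(|x|^{s-n-m})=\Big(1+\tfrac{\widetilde a}2|y|^{1-n-m}\Big)y_{n+1}+y_{n+1}O'(|y|^{s-n-m}).
$$
Likewise, the metric components transform by $g_{ij}(x)=\widetilde g_{kl}(y)\,\partial_iy_k\,\partial_jy_l$. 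The plan has three steps.

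\textbf{Step 1.} The first step is to show that $D\Psi$ is asymptotically orthogonal and that $\Psi$ preserves the boundary. Since $\Psi$ maps $\Sigma$ to $\Sigma$, we have $y_{n+1}=0$ exactly when $x_{n+1}=0$. From the metric relation, $D\Psi^TD\Psi=I+O(|x|^{1-n-m})$, so $D\Psi$ lies within $O(|x|^{1-n-m})$ of $O(n+1)$. A Bartnik-type argument, integrating along rays and using the derivative control encoded in $O'$, then gives $\Psi(x)=Ox+b+O(|x|^{2-n-m})$, possibly with a logarithm when $n+m=2$, where $O$ is a fixed orthogonal matrix and $b$ a constant vector. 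The boundary condition forces $O$ to preserve $\{x_{n+1}=0\}$ and the half-space, so $O=\mathrm{diag}(O',1)$ and $b_{n+1}=0$. In particular $|y|=|x|+O(1)$, and hence $|y|^{1-n-m}=|x|^{1-n-m}+O(|x|^{-n-m})$. For $n\ge2$ and $m>-1$ we have $n+m>1$, so every error term decays faster than $|x|^{1-n-m}$.

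\textbf{Step 2.} Next I would compare $y_{n+1}$ with $x_{n+1}$. Write $y_{n+1}=x_{n+1}(1+\theta(x))$, which is possible because $y_{n+1}$ vanishes on the boundary and is smooth, with $\theta=O(|x|^{1-n-m})$ by Step 1. Substituting into the $v$ relation and dividing by $x_{n+1}$ gives
$$
\tfrac a2|x|^{1-n-m}=\tfrac{\widetilde a}{2}|x|^{1-n-m}+\theta+o(|x|^{1-n-m}).
$$
Thus $\theta=\tfrac{a-\widetilde a}{2}|x|^{1-n-m}+o(|x|^{1-n-m})$.

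\textbf{Step 3.} I would then feed this into the metric relation. Take the $(n+1,n+1)$ component, or more cleanly the determinant or trace of $g_{ij}=\widetilde g_{kl}\partial_iy_k\partial_jy_l$. Using $\partial_{n+1}y_{n+1}=1+\theta+x_{n+1}\partial_{n+1}\theta$, the terms proportional to $|x|^{1-n-m}$ must match. The derivative term contributes $x_{n+1}\partial_{n+1}\bigl(c|x|^{1-n-m}\bigr)=c(1-n-m)x_{n+1}^2|x|^{-1-n-m}$, which is not a multiple of $|x|^{1-n-m}$. Restricting to the boundary, where $x_{n+1}=0$, the $(n+1,n+1)$ entry gives
$$
a=\widetilde a+2\cdot\tfrac{a-\widetilde a}{2}+(\text{tangential corrections}).
$$
This relation is vacuous, so I would instead use the tangential block. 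The first $n\times n$ block of $D\Psi$ restricted to the boundary defines a map $\{x_{n+1}=0\}\to\{y_{n+1}=0\}$ that is conformal up to the factor $(1+a|x|^{1-n-m})/(1+\widetilde a|y|^{1-n-m})$. Combining this with the normal component, where the factor $(1+\theta)^2$ appears, and requiring that this conformal factor be consistent with the $\theta$ found in Step 2 across all directions, should force $(a-\widetilde a)|x|^{1-n-m}$ to be $o(|x|^{1-n-m})$. Hence $a=\widetilde a$.

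\textbf{Main obstacle.} I expect the hard part to be excluding a nontrivial asymptotic conformal-type map. A dilation-like correction $y=x\bigl(1+c|x|^{1-n-m}\bigr)$ changes the metric coefficient by an amount involving $c(1-n-m)$, not just $c$, and it changes $v$ by $c$. I would verify directly that such a correction has Jacobian $(1+c|x|^{1-n-m})I+c(1-n-m)|x|^{-1-n-m}x\otimes x$, which is not a multiple of the identity unless $c=0$. This is the mechanism that pins $a=\widetilde a$. For $-1<m<0$, I would use the extra structural assumption to control the non-isotropic error near $x_{n+1}=0$. As an alternative route, I would compute $\mathfrak m$ via Proposition~\ref{lemma:mass} in both charts and show that the flux integrals differ by a divergence term that vanishes in the limit, following Bartnik's argument.
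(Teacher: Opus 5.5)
Your Steps 1 and 2 are essentially fine. One slip: $|y|=|x|+O(1)$ fails when $n+m<2$, where $\Psi-Ox$ may grow like $|x|^{2-n-m}$. All you actually use, however, is $|y|^{1-n-m}=|x|^{1-n-m}(1+o(1))$, which still holds. The gap is Step 3. It is the only place where $a=\widetilde a$ could come out, and it is not carried out.

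You correctly isolate the informative term $x_{n+1}\partial_{n+1}(c|x|^{1-n-m})=c(1-n-m)\,x_{n+1}^2|x|^{-1-n-m}$, with $c=\tfrac{a-\widetilde a}{2}$. Away from the boundary this is of the same order as $|x|^{1-n-m}$, only with an angular coefficient. You then discard it by restricting to $\{x_{n+1}=0\}$, where it vanishes and the identity becomes vacuous. The replacement argument via the tangential block is only asserted (``should force''), and it cannot work pointwise. On the boundary you only know two things: $\Psi$ restricted there is conformal up to the factor $1+2c|x|^{1-n-m}+o(|x|^{1-n-m})$, and $\partial_{n+1}y_{n+1}=1+\theta$. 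At each point these are compatible with every value of $c$.

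Your radial ansatz $y=x(1+c|x|^{1-n-m})$ illustrates the mechanism but says nothing about a general transition map. The flux alternative in your last sentence is the paper's proof, only named.

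The step closes if you make the $(n+1,n+1)$ comparison at interior points.

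\emph{Metric side.} Use Step 1 in its sharp form, $D\Psi=O+O(|x|^{1-n-m})$ with a constant $O=\mathrm{diag}(O',1)$. This is what the Bartnik argument actually yields, from $\partial^2\Psi=O(|x|^{-n-m})$ and $n+m>1$. Then $\partial_{n+1}y_\alpha=O(|x|^{1-n-m})$ for $\alpha\le n$, so these terms contribute only $O(|x|^{2(1-n-m)})$ to the $(n+1,n+1)$ entry. The metric relation therefore gives $(\partial_{n+1}y_{n+1})^2=1+2c|x|^{1-n-m}+o(|x|^{1-n-m})$.

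\emph{Weight side.} Differentiate the $v$-identity. Two inputs are needed: the derivative bounds in $O'$, so that $x_{n+1}\partial_{n+1}$ of the error terms is $O(|x|^{s-n-m})$; and $\langle y,\partial_{n+1}y\rangle=y_{n+1}+O(|x|^{2-n-m})$. This gives
\[
\partial_{n+1}y_{n+1}=1+\theta+x_{n+1}\partial_{n+1}\theta=1+c|x|^{1-n-m}\Bigl(1+(1-n-m)\frac{x_{n+1}^2}{|x|^2}\Bigr)+o(|x|^{1-n-m}).
\]
Squaring and comparing with the metric side along $x=te_{n+1}$, $t\to\infty$, forces $c(1-n-m)=0$. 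Hence $c=0$, because $n+m>1$.

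Completed this way, your route is genuinely different from the paper's, which shows the flux limit itself is unchanged. The paper writes $e-\Phi^*\widetilde e=L_Z\delta+O(|x|^{-2\tau})$ and notes that $\mathbb U(L_Z\delta,L_Zx_{n+1})x_{n+1}^m$ is the divergence of an antisymmetric tensor. It then uses that the Schwarzschild part of the weighted charge is divergence free; the boundary terms there are where the extra hypothesis for $-1<m<0$ is needed. Your argument instead exploits that the scalar $v$ pins down $y_{n+1}$. It uses neither flux integrals nor that extra hypothesis.
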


The independence of the asymptotic coordinates characterizes $\mathfrak m_{(g,v,m)}$ as a geometric invariant of the ASSMMS. 
Theorem \ref{thm:geom:inv} is proved in Section \ref{sec:geom:inv}.

%The case of main interest comes from the generalized stereographic projetion, constructed below in Section \ref{sec:stereogr}.  In this case, $\mathfrak m_{(g,v,m)}=c(n,m)A$ where $c(n,m)>0$ and $A$ is the constant in the expansion of the Green's function in Conjecture \ref{conj:green}. This is the content of Theorem \ref{propo:as:flat} below.

%As mentioned above, Conjeture \ref{conj:green} is proved by Wang-Zhou in \cite{wang-zhou}, establishing a positive mass theorem for this setting. 

Inspired by Schoen's classical construction in \cite{schoen}, we define the generalized stereographic projection for a C.C.E. manifold as follows.
Let $(X,g_+)$ be a C.C.E. manifold with conformal infinity $N=\partial \overline X$ and let $\rho$ be its geodesic defining function.

\begin{definition}\label{def:ster:proj}
Fix $p\in N$ and suppose that $G_p$ is a positive Green's function satisfying the system \eqref{system:green}.
Define the {\rm{stereographic projection}} of 
$$(\overline X, \overline g, \rho^m dvol_{\overline g})$$ 
as the SMMS $(M,g,v^mdvol_g)$, where $m=1-2\gamma$, $M=\overline X\backslash\{p\}$, $g=G_p^{\frac{4}{n-2\gamma}}\overline g$ and $v=G_p^{\frac{2}{n-2\gamma}}\rho$. 
\end{definition} 

We will first look at our model example of C.C.E. manifolds. Let $$\mathbb{B}^{n+1}=\{y\in \mathbb R^{n+1}\:|\: |y|<1\}$$ be the unit ball with hyperbolic metric $$g_{\mathbb B}=\frac{4}{(1-|y|^2)^2}\delta,$$ where $\delta$ is the Euclidean metric in $\mathbb R^{n+1}$. A defining function in this case is $$\rho_{\mathbb B}=\frac{1-|y|^2}{2}.$$ Then $\overline{g_{\mathbb{B}}}=\rho_{\mathbb B}^2g_{\mathbb{B}}=\delta.$ This makes $(\mathbb{B}^{n+1}, g_{\mathbb{B}})$ a C.C.E. manifold with $(\overline{\mathbb{B}^{n+1}},\overline{g_{\mathbb{B}}})$ being exactly the Euclidean ball. Similarly let $$\mathbb{H}^{n+1}=\{x\in \mathbb R^{n+1}\:|\:x_{n+1}>0\}$$ be the half space with hyperbolic metric $$g_{\mathbb H}=\frac{1}{x_{n+1}^2}\delta.$$ Then $x_{n+1}^2g_{\mathbb{H}}=\delta.$ Now the two models are related by the conformal map 
$$\Phi: \mathbb{B}^{n+1}\to \mathbb{R}^{n+1}_+,$$  

$$ x_i=\Phi_i(y)=\frac{2y_i}{|y-e_{n+1}|^2}, \:\: i=1,2,...,n,$$

$$x_{n+1}=\Phi_{n+1}(y)=\frac{1-|y|^2}{|y-e_{n+1}|^2},$$
where $e_{n+1}=(0,...,0,1)\in \mathbb R^{n+1}.$ It follows from direct calculations that 
$$\Phi^*\delta=\varphi^{\frac{4}{n-2\gamma}}\overline{g_{\mathbb{B}}},$$ and 
$$x_{n+1}=\varphi^{\frac{2}{n-2\gamma}}\rho_{\mathbb{B}},$$ where 
$$\varphi(y)=\left(\frac{2}{|y-e_{n+1}|^2}\right)^{\frac{n-2\gamma}{2}}.$$ This model case fits Definition~\ref{def:ster:proj}.

The following result shows that the ASSMMS in Definition \ref{def:as:flat} comprises the stereographic projection in Definition \ref{def:ster:proj}:

\begin{theorem}\label{propo:as:flat}
Suppose that $n=2$ or $N$ is locally conformally flat. The stereographic projection  $(M,g,v^mdvol_g)$ of $(\overline X, \overline g, \rho^m dvol_{\overline g})$ is an ASSMMS, with non-compact boundary $\Sigma=N\backslash\{p\}$, satisfying the asymptotic expansion \eqref{eq:can:exp} and \eqref{eq:can:exp:v} with 
$$a=\frac{4A}{n-2\gamma}.$$
If $0<\gamma<1$, the mass of $(M,g,v^mdvol_g)$ is given by 
$$
\mathfrak m_{(g,v,m)}=4A(1-2\gamma+n)\int_{|x|=1, x_{n+1}> 0}x_{n+1}^{1-2\gamma}d\sigma.
$$
Moreover, $R_{(g,v,m)}=0$ for all $0<\gamma<1$.
\end{theorem}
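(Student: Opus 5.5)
The plan is to work in inverted coordinates centered at $p$, mirroring Schoen's classical argument, and to read off the expansions \eqref{eq:can:exp}, \eqref{eq:can:exp:v} from the Green's function expansion \eqref{eq:green}.

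\emph{Step 1: conformally flat coordinates near $p$.} Using local conformal flatness of $N$ (or $n=2$), change $\overline g$ within its conformal class so that near $p$ the boundary metric is flat. Then take $\rho$ to be the geodesic defining function for this representative; the conformal covariance of $L_{(\overline g,\rho,m)}$ in \eqref{eq:conf:L} transports $G_p$ accordingly. Since $h_\rho$ is even in $\rho$, near $p$ we get Fermi-type coordinates $z=(z',z_{n+1})$ with $z_{n+1}=\rho$ and $\overline g=\delta+O(|z|^2)$, hence $d_{\overline g}(q,p)=|z|(1+O(|z|^2))$. Combined with \eqref{eq:green}, after normalizing $c_{n,\gamma}=1$ (absorbed into a constant multiple of $G_p$), this gives
\[
G_p=|z|^{2\gamma-n}+A+O'(|z|^{\min\{1,2\gamma\}}).
\]

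\emph{Step 2: inversion.} Set $x=z/|z|^2$, which maps $\{z_{n+1}>0\}$ to $\mathbb R^{n+1}_+$ with $x_{n+1}=z_{n+1}/|z|^2$. Since $\delta_z=|x|^{-4}\delta_x$, we get
\[
g=G_p^{\frac{4}{n-2\gamma}}\overline g=\left(|x|^{n-2\gamma}G_p\right)^{\frac{4}{n-2\gamma}}\left(\delta+O(|x|^{-2})\right),\qquad
|x|^{n-2\gamma}G_p=1+A|x|^{2\gamma-n}+O'(|x|^{2\gamma-n-\min\{1,2\gamma\}}).
\]
Because $m=1-2\gamma$, we have $2\gamma-n=1-n-m$, and so $g_{ij}=(1+\frac{4A}{n-2\gamma}|x|^{1-n-m})\delta_{ij}+O'(|x|^{s-n-m})$ for some $0<s<1$. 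Similarly,
\[
v=G_p^{\frac{2}{n-2\gamma}}\rho=\left(|x|^{n-2\gamma}G_p\right)^{\frac{2}{n-2\gamma}}x_{n+1},
\]
which gives \eqref{eq:can:exp:v} with the same $a=4A/(n-2\gamma)$.

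\emph{Main obstacle.} The hard part is the error bookkeeping. First, the $O(|z|^2)$ metric error becomes $O(|x|^{-2})$, and this must be dominated by $|x|^{s-n-m}$; that needs $2>n+m-s$, which fails for large $n$. So I expect to need conformal normal coordinates: choose the flat representative so that $\overline g-\delta=O(z_{n+1}^2)$ with the boundary exactly flat, and use evenness to gain a factor $x_{n+1}$. Second, the refined structure $O'\delta_{ij}+x_{n+1}O'(|x|^{s-n-m-1})$ must be checked, since the non-conformal part of the error carries a factor $z_{n+1}$. I would first try to use the evenness of $h_\rho$ and flatness of $h_0$ to show $\overline g-\delta=\rho^2 O(1)$ in the tangential block, which gives exactly this structure after inversion. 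One must also check that the Green's function error is smooth up to the boundary in the weighted sense, using the regularity from \cite{mayer-ndiaye1}.

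\emph{Mass formula and scalar curvature.} The mass formula then follows from Proposition \ref{lemma:mass}:
\[
\mathfrak m=(m+n)(m+n-1)a\,\sigma_{n,m}=(n+1-2\gamma)(n-2\gamma)\frac{4A}{n-2\gamma}\sigma_{n,m},
\]
as claimed. For $R_{(g,v,m)}=0$, I would note that $(g,v)=(u^{-2}\overline g,u^{-1}\rho)$ with $u=G_p^{-2/(n-2\gamma)}$ and $(n+m-1)/2=(n-2\gamma)/2$. Then \eqref{eq:conf:L} with $w=1$ gives
\[
\tfrac{m+n-1}{4(m+n)}R_{(g,v,m)}=u^{\frac{m+n+3}{2}}L_{(\overline g,\rho,m)}G_p,
\]
which vanishes in $X$ by \eqref{system:green} and \eqref{eq:L:div}.
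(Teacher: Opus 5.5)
Your architecture is the paper's: coordinates at $p$ with $\rho$ as the last coordinate, the Green's function expansion, inversion, then Proposition~\ref{lemma:mass} for the mass and \eqref{eq:conf:L} (with $u^{-(m+n-1)/2}=G_p$) for $R_{(g,v,m)}=0$. Those last two steps are correct.

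The gap is the estimate you yourself flag as the main obstacle, and your proposed fix does not close it. With only $\overline g=\delta+O(|z|^2)$, or $O(z_{n+1}^2)$ with flat boundary, two things fail.
\begin{itemize}
\item \emph{The Green's function expansion.} Rewriting \eqref{eq:green} in terms of $|z|$ gives $c_{n,\gamma}d_{\overline g}(q,p)^{2\gamma-n}=c_{n,\gamma}|z|^{2\gamma-n}+O(|z|^{2\gamma-n+2})$. As soon as $n\ge 2+2\gamma$ this error does not tend to zero, so it cannot be separated from $A$. Your displayed expansion of $G_p$ in Step~1 is therefore not justified.
\item \emph{The metric after inversion.} $O(z_{n+1}^2)$ becomes $O(x_{n+1}^2|x|^{-4})=x_{n+1}O(|x|^{-3})$. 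This has the form $x_{n+1}O(|x|^{s-n-m-1})$ only if $n+m-s\le 2$. With $m=1-2\gamma$ and $s<1$, that fails for every $n\ge 4$, and for $n=3$ with $\gamma\le 1/2$.
\end{itemize}
So the tangential bound $\overline g-\delta=\rho^2O(1)$ does \emph{not} give the required structure. Evenness of $h_\rho$ by itself only removes odd powers; it does not remove the $\rho^2$ term.

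What is missing is the Einstein equation, used through the Fefferman--Graham expansion. Take $\rho$ to be the geodesic defining function for a boundary representative $h_0$ that is flat near $p$. Then the coefficients $h_{2k}$ with $2k<n$ are local curvature expressions in $h_0$ and vanish; for instance, for $n\ge 3$, $h_2$ is minus the Schouten tensor of $h_0$. For even $n$ the logarithmic term is a multiple of the obstruction tensor, which also vanishes here. Hence $h_\rho=\delta+O(\rho^n)$. For $n=2$ the bound $O(\rho^2)=O(\rho^n)$ is automatic.

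This is exactly \eqref{eq:coord}, which the paper takes from Kim--Musso--Wei, together with the Mayer--Ndiaye expansion $G_p=|y|^{2\gamma-n}+A+O(|y|^{2\gamma})$ in those same coordinates. After inversion:
\begin{itemize}
\item the metric error is $O(x_{n+1}^n|x|^{-2n})=x_{n+1}O(|x|^{-n-1})$;
\item the Green's function error is $O(|x|^{-\min\{n,2(n-2\gamma)\}})\delta_{ij}$.
\end{itemize}
Both are admissible for $s<1$ close enough to $1$ with $s\ge m$, and this also gives the refined structure required when $-1<m<0$. If you replace your Step~1 by this input, the rest of your argument goes through as in the paper.
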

\begin{proof}
It follows from \cite[Equation (4-18)]{kim-musso-wei} for $n=2$ and \cite[Equation (4-7) and Lemma 4.3]{kim-musso-wei} for $n\geq 3$ that 
\begin{equation}\label{eq:coord}
\bar{g}\left(\frac{\partial}{\partial y_j},\frac{\partial}{\partial y_l}\right)=\delta_{jl}+O(y_{n+1}^n),\:\: j,l=1,...,n+1,
\end{equation}
in an appropriate coordinate system $(y_1,...,y_{n+1})$ such that $y_{n+1}=\rho(y)$. 
For inverted coordinates $x=|y|^{-2}y$ we have 
$$
\frac{\partial}{\partial x_i}=\sum_{j=1}^{n+1}|x|^{-2}(\delta_{ij}-2|x|^{-2}x_ix_j)\frac{\partial}{\partial y_j},\:\:i=1,...,n+1,
$$
so that
\begin{equation}\label{eq:bar:g}
\bar{g}\left(\frac{\partial}{\partial x_i},\frac{\partial}{\partial x_k}\right)=|x|^{-4}(\delta_{ik}+O(x_{n+1}^n|x|^{-2n})),\:\:i,k=1,...,n+1.
\end{equation}
On the one hand,
\begin{equation}\label{eq:g}
g_{ik}(x)=g\left(\frac{\partial}{\partial x_i},\frac{\partial}{\partial x_k}\right)=G_p(|x|^{-2}x)^{\frac{4}{n-2\gamma}}\bar{g}\left(\frac{\partial}{\partial x_i},\frac{\partial}{\partial x_k}\right)\:\:i,k=1,...,n+1.
\end{equation}
Using the results in \cite{mayer-ndiaye2}, obtained also in the coordinates \eqref{eq:coord}, we have  
\begin{equation}\label{eq:G}
\begin{aligned}
	G_{p}(y)^{\frac{4}{n-2\gamma}}&=(|y|^{2\gamma-n}+A+O(|y|^{2\gamma}))^{\frac{4}{n-2\gamma}}\\
	&=|y|^{-4}\left(1+\frac{4A}{n-2\gamma}|y|^{n-2\gamma}+O(|y|^{\min\{n,2(n-2\gamma)\}})\right)\\
	&=|x|^{4}\left(1+\frac{4A}{n-2\gamma}|x|^{2\gamma-n}+O(|x|^{-{\min\{n,2(n-2\gamma)\}}})\right).
\end{aligned}
\end{equation}
It now follows from \eqref{eq:g},  \eqref{eq:bar:g} and  \eqref{eq:G} that
$$g_{ik}(x)=\left(1+\frac{4A}{n-2\gamma}|x|^{2\gamma-n}+O(|x|^{-{\min\{n,2(n-2\gamma)\}}})\right)\delta_{ik}+O(x_{n+1}^n|x|^{-2n}).$$
Similarly,
$$
	G_{p}(y)^{\frac{2}{n-2\gamma}}=|x|^{2}\left(1+\frac{2A}{n-2\gamma}|x|^{2\gamma-n}+O(|x|^{-{\min\{n,2(n-2\gamma)\}}})\right)
$$
and $\rho(y)=y_{n+1}$ gives 
$$v(x)=\left(1+\frac{2A}{n-2\gamma}|x|^{2\gamma-n}\right)x_{n+1}+x_{n+1}O(|x|^{-{\min\{n,2(n-2\gamma)\}}}).$$
This proves that the asymptotic expansions \eqref{eq:can:exp} and \eqref{eq:can:exp:v}  hold with  $a=\frac{4A}{n-2\gamma}$. The statement about the mass holds in view of Proposition~\ref{lemma:mass}.

%It remains to prove the vanishing of $R_{(g,v,m)}$ and $\mathcal H_{(g,v,m)}$. 
By formula \eqref{eq:L:div}, the first equation of \eqref{system:green} says that $L_{(\overline g,\rho,m)}G_p=0$ on $\overline X\backslash\{p\}$. Using the conformal property \eqref{eq:conf:L} we see that $R_{(g,v,m)}=0$ on $M$.

%As for $\mathcal H_{(g,v,m)}$, it follows from \eqref{eq:conf:B} that 
%$$
%\mathcal H_{(g,v,m)}=G_p^{-\frac{m+n+1}{m+n-1}}\left(-\frac{2(m+n)}{m+n-1}\partial_\rho G_p+\mathcal H_{(\overline g,\rho,m)} \right),
%$$
%where we used that the unit inward pointing normal vector to the boundary $N$ of $\overline X$ is $\nabla_{\overline g}\rho$. Using this again and that $N$ is totally geodesic with respect to $\overline g$, we see that  $\mathcal H_{(\overline g,\rho,m)}=0$ so that 
%$$
%\mathcal H_{(g,v,m)}=-\frac{2(m+n)}{m+n-1}G_p^{-\frac{m+n+1}{m+n-1}}\partial_\rho G_p.
%$$
%The equation $\lim_{\rho\to 0}(\rho^m\partial_\rho G_p)=0$ on $\overline X\backslash \{p\}$, which comes from the second equation in \eqref{system:green}, ensures that
%$\mathcal H_{(g,v,m)}=0$ when $\gamma\in [1/2,1)$.
\end{proof}

\begin{remark}
	It follows from a direct calculation that, in the model case presented before Theorem~\ref{propo:as:flat}, we have 
	$$-div_{\overline{g_{\mathbb{B}}}}(\rho_{\mathbb{B}}^m\nabla_{\overline{g_{\mathbb{B}}}}\varphi)+E(\rho_{\mathbb{B}})\varphi=0, \:\: \text{in}\:\: \mathbb{B}^{n+1},$$ with 
	$$\lim_{\rho_{\mathbb{B}}\to 0}(\rho_{\mathbb{B}}^m\partial_{\rho_{\mathbb{B}}}\varphi)=\delta_{p}, \:\: \text{on}\:\: \partial \mathbb{B}^{n+1},$$ holding  when $m>0$ for $p=e_{n+1}$. However, although $\rho_{\mathbb{B}}$ is a defining function for the hyperbolic metric ${g_{\mathbb{B}}}$ on $\mathbb{B}^{n+1}$, it is not its (unique) geodesic defining function as $\nabla_{\overline{g_{\mathbb{B}}}}\rho_{\mathbb{B}}=1$ holds only on $\partial \mathbb{B}^{n+1}$. As a consequence, $\varphi$ is not the Green's function of Proposition~\ref{propo:green}, Conjecture~\ref{conj:green}, and Theorem~\ref{propo:as:flat}. 
\end{remark}

%%%%%%%%%%%%%%%%%%%%%%%%%%%%%%%%%%%%%%%%%%%%%%%%%%%%%%%%%%%%%%%%%%%%%%%%%%%%%%%%

%%%%%%%%%%%%%%%%%%%%%%%%%%%%%%%%%%%%%%%%%%%%%%%%%%%%%%%%%%%%%%%%%%%%%%%%%%%%%%%%%%%%%%%%%

\section{The mass total charges}\label{sec:interpr:mass}

In this section, we justify the choice of expression \eqref{eq:mass} to define the mass $\mathfrak m_{(g,v,m)}$. We will perform formal calculations that fits Michel's general construction in \cite{michel}, to which our paper adds extra terms coming from the boundary $\Sigma$. Michel´s construction consists in identifying certain divergence terms in the linearization of the local charge density quantities, isolating the charge integrands in Definition \ref{def:mass}. Namely, the divergencies of the charge integrands $\mathbb U$ and $\overline{\mathbb U}$ appear in the linearization expansions of the local charge densities $R_{(g,v,m)}$ and $\mathcal H_{(g,v,m)}$ and the remaining parts of the expansions are the formal adjoints of those linear operators; see \eqref{eq:div:f} and \eqref{eq:div:bdry} below. The vanishing of those formal adjoints terms determines the static equations \eqref{sys:static} and \eqref{sys:static:bdry} below, which are trivially solved for the Euclidean half-space $\mathbb R^{n+1}_+$ with its first $n$ coordinate functions acting as static potentials and the last one to the power $m$ as the canonical weight.

Let $(M,g,v^mdvol_g)$ be a SMMS with boundary $\Sigma=v^{-1}(0)$.
In this section, we calculate variations of the weighted scalar curvature $R_{(g,v,m)}$ and of the modified weighted mean curvature $\mathcal H_{(g,v,m)}$ with respect to $g$ and $v$.
Those two quantities will play the role of local charge densities.  
We will do formal calculations, ignoring the fact that negative powers of $v$ are not well defined on $\Sigma$.

If $f$ is any smooth function on $M$, using Proposition \ref{propo:R:var} in the Appendix, we can write
\begin{align*}
fv^mR^{\bullet}_{(g,v,m)}(e,0)=\,
&\text{div}_g\big(fv^m(\text{div}_ge-d\,\text{tr\,}_g e)+mv^{m-1}f\nabla_gv\righthalfcup e\big)
\\
&+\text{div}_g\big(-v^m \nabla_gf\righthalfcup e+v^m\text{tr\,}_g e\,df\big)
\\
&+\big\langle -f\text{Ric}_g+mfv^{-1}\nabla^2_gv-mv^{-1}dv\otimes df,e \big\rangle_gv^m
\\
&+\big\langle \nabla^2_g f-\Delta_gf\,g+mv^{-1}dv\otimes df-mv^{-1}g(\nabla_g v,\nabla_g f)g,e \big\rangle_gv^m
\end{align*}
and
\begin{align*}
fv^m&R^{\bullet}_{(g,v,m)}(0,w)=\\
&\text{div}_g\big(2mv^{m-1}(w\,df-f\,dw)\big)+2mv^{m-3}\big(fv\Delta_gv
\\
&+(m-1)f|\nabla_gv|^2_g-(m-1) f-(m-1)vg(\nabla_g v,\nabla_g f)-v^2\Delta_gf\big)w.
\end{align*}
Define the charge integrand
\begin{align}\label{eq:U:1}
\mathbb U_f(e,w)=\,
&f(\text{div}_ge-d\,\text{tr}_ge+mv^{-1} \nabla_gv\righthalfcup e)
\\
&-\nabla_gf\righthalfcup e+\text{tr}_ge\,df+2mv^{-1}(w\,df-f\,dw),\notag
\end{align}
so that
\begin{equation}\label{eq:div:f}
fv^mR^{\bullet}_{(g,v,m)}(e,w)=\text{div}_g(\mathbb U_f(e,w)v^m)+\langle R_{(g,v,m)}^{\bullet,*}f,(e,w)\rangle v^m.
\end{equation}
Here,
$R_{(g,v,m)}^{\bullet,*}f=\big((R_{(g,v,m)}^{\bullet,*}f)_1, (R_{(g,v,m)}^{\bullet,*}f)_2\big)$
is the formal adjoint, where
$$
(R_{(g,v,m)}^{\bullet,*}f)_1=-f\text{Ric}_g+mfv^{-1}\nabla^2_gv
+\nabla^2_g f-\Delta_gf\,g-mv^{-1}g(\nabla_g v,\nabla_g f)g,
$$
and 
\begin{align*}
(R_{(g,v,m)}^{\bullet,*}f)_2=
2m\big(fv^{-2}\Delta_gv&+(m-1)v^{-3}f|\nabla_gv|^2_g-(m-1) v^{-3}f
\\
&-(m-1)v^{-2}g(\nabla_g v,\nabla_g f)-v^{-1}\Delta_gf\big).
\end{align*}
The vanishing of the formal adjoint applied to $f$, namely $R_{(g,v,m)}^{\bullet,*}f=(0,0)$,  determines the static equations    
\begin{align}\label{sys:static}
\begin{cases}
\nabla_g^2f-f\text{Ric}^m_{g,v}-\Delta^m_{g,v}f\,g=0,
\\
-\Delta^{m-1}_{g,v}f+fv^{-1}\Delta^{m-1}_{g,v}v-(m-1) fv^{-2}=0,
\end{cases}
\end{align}
where
$$
\text{Ric}^k_{g,v}=\text{Ric}_g-kv^{-1}\nabla^2_gv
\:\:\:\:
\text{and}
\:\:\:\:
\Delta^k_{g,v}h=\Delta_gh+kv^{-1}g(\nabla_g v, \nabla_g h),
\:\:\:\:k\in\mathbb R.
$$

On the other hand, assuming $\eta=-\nabla_gv$ on $\Sigma$ and using Proposition \ref{propo:H:var} in the Appendix we have
\begin{align*}
		2fv^m\mathcal H^{\bullet}_{(g,v,m)}
		&=fv^m(d\text{tr}_ge-\text{div}_ge)(\eta)-f\text{div}_{g|_\Sigma}(v^m \eta\righthalfcup e)
		\\
		&\hspace{0.6cm}-\langle f\pi_g,e\rangle_{g|_\Sigma}v^m-mfv^{m-1}(\nabla_gv\righthalfcup e)(\eta)
		\\
		&\hspace{0.6cm}+2mfv^{m-1}dw (\eta)
		\\
		&=fv^m(d\text{tr}_ge-\text{div}_ge)(\eta)
		\\
		&\hspace{0.6cm}-\text{div}_{g|_\Sigma}(fv^m \eta\righthalfcup e)+v^m\nabla_gf\righthalfcup e (\eta)
		\\
		&\hspace{0.6cm}-\langle f\pi_g,e\rangle_{g|_\Sigma}v^m-mfv^{m-1}(\nabla_gv\righthalfcup e)(\eta)
		\\
		&\hspace{0.6cm}+2mfv^{m-1}dw (\eta)
		\\
		&=-v^m\mathbb U_f(e,w)(\eta)+\text{div}_{g|_\Sigma}(v^m\overline{\mathbb U}_f(e,w))
		\\
		&\hspace{0.6cm}-\langle f\pi_g,e\rangle_{g|_\Sigma}v^m+v^m(\text{tr}_ge +2mv^{-1}w)df(\eta),
\end{align*}
where $\overline{\mathbb U}_f(e,w)=-f\eta\righthalfcup e$ is the boundary charge integrand. Then
\begin{equation}\label{eq:div:bdry}
2fv^m\mathcal H^{\bullet}_{(g,v,m)}=-v^m\mathbb U_f(e,w)(\eta)+\text{div}_{g|_\Sigma}(v^m\overline{\mathbb U}_f(e,w))+\langle \mathcal H_{(g,v,m)}^{\bullet,*}f, (e,w)\rangle v^m,
\end{equation}
where $\mathcal H_{(g,v,m)}^{\bullet,*}f=\big((\mathcal H_{(g,v,m)}^{\bullet,*}f)_1, (\mathcal H_{(g,v,m)}^{\bullet,*}f)_2\big)$ with
$$
(\mathcal H_{(g,v,m)}^{\bullet,*}f)_1=-f\widetilde \pi_g+df(\eta)g,
\qquad\text{and}\qquad
(\mathcal H_{(g,v,m)}^{\bullet,*}f)_2=2mv^{-1}df(\eta).
$$
Here, $\widetilde \pi_g$ is the extension to the tangent space of $M$ of the second fundamental form $\pi_g$ of $\Sigma$ in such a way that  $\eta\righthalfcup\widetilde \pi_g=0$. 
The boundary static equations $\mathcal H_{(g,v,m)}^{\bullet,*}f=(0,0)$ become 
\begin{align}\label{sys:static:bdry}
f\pi_g=0
\qquad\text{and}\qquad
\partial f/\partial \eta=0,
\qquad\text{on}\:\: \Sigma.
\end{align}

Now we fix an underlying background SMMS $(M_0, g_0, v_0^m dvol_{g_0})$ with boundary $v_0^{-1}(0)=\Sigma_0$ such that the outwards pointing unit normal vector $\eta_0$ satisfies $\eta_0=-\nabla_{g_0}v_0$. Assume also that this SMMS carries non-trivial solutions $f$ to the static equations \eqref{sys:static} and \eqref{sys:static:bdry}.
Let $\{M_0^{(\rho)}\}_{\rho>0}$ be an exhaustion of $M_0$ by compact sets $M_0^{(\rho)}$ with (non-smooth) boundaries $\partial M_0^{(\rho)}=S^{(\rho)}\cup \Sigma_0^{(\rho)}$ where

$\bullet$ $\Sigma_0^{(\rho)}\subset \Sigma_0$ is a compact set with boundary  $\partial \Sigma_0^{(\rho)}$ such that $\{\Sigma_0^{(\rho)}\}_{\rho>0}$ is an exhaustion of $\Sigma_0$;

$\bullet$  $S^{(\rho)}\subset M_0$ is a smooth hypersurface with boundary $\partial S^{(\rho)}=\partial \Sigma_0^{(\rho)}$.

Let $\mathcal N_0$ denote the space of static potentials in the underlying SMMS.  Precisely, $\mathcal N_0$  is the set of smooth functions $f$ such that  \eqref{sys:static} and \eqref{sys:static:bdry} hold in $(M_0, g_0, v_0^mdvol_{g_0})$.

Let $(M,g,v^mdvol_g)$ be a SMMS with boundary $v^{-1}(0)=\Sigma$, which we assume to be modelled by $(M_0, g_0, v_0^mdvol_{g_0})$ in the following sense: there exist compact sets $K_0\subset M_0$ and $K\subset M$, and a diffeomorphism 
$\phi:M\backslash K\to M_0\backslash K_0$. Set
$$
e:=\phi_*g-g_0
\qquad{and}\qquad
w:=v\circ \phi^{-1}-v_0.
$$
%have certain decay rates as $\rho\to \infty$ that we will not make precise for this discussion. 
%These decay rates will be made precise when we specify our discussion to the model case relevant to this paper, namely, the Euclidean half-space.

For $f\in\mathcal N_0$,  assume that the following limit exists: 
\begin{align}\label{limit}
\lim_{\rho\to\infty}\left\{\int_{M_0^{(\rho)}}f R^{\bullet}_{(g_0,v_0,m)}(e,w)v_0^mdvol_{g_0}+\int_{\Sigma_0^{(\rho)}}2f \mathcal H^{\bullet}_{(g_0,v_0,m)}(e,w)v_0^md\Sigma_0\right\}.
\end{align}
Observe that, for each $\rho>0$,
\begin{align}\label{int:parts}
\int_{S^{(\rho)}}&\langle \mathbb U_f(e,w), \mu_{g_0}\rangle v_0^m d\sigma_{g_0}+\int_{\partial S^{(\rho)}}\langle \overline{\mathbb U}_f(e,w), \nu_{g_0}\rangle v_0^m d\overline \sigma_{g_0}
\\
&=\int_{M_0^{(\rho)}}f R^{\bullet}_{(g_0,v_0,m)}(e,w)v_0^mdvol_{g_0}+\int_{\Sigma_0^{(\rho)}}2f \mathcal H^{\bullet}_{(g_0,v_0,m)}(e,w)v_0^md\Sigma_0\notag
\end{align}
hold by an integration by parts, where $\mu_{g_0}$ and $\nu_{g_0}$ are the unit outward pointing normal and co-normal vectors to the hypersurfaces $S^{(0)}$ and $\partial S^{(0)}$ respectively, and $d\sigma_{g_0}$ and $d\overline\sigma_{g_0}$ are the area elements.

In the case the limit \eqref{limit} exists for every $f\in\mathcal N_0$, we define the linear functional 
$$
\mathfrak m_{(g,v,m)}:\mathcal N_0\to\mathbb R
$$
by 
\begin{equation}\label{limit:2}
\mathfrak m_{(g,v,m)}(f)=\lim_{\rho\to\infty}\left\{
\int_{S^{(\rho)}}\langle \mathbb U_f(e,w), \mu_{g_0}\rangle v_0^md\sigma_{g_0}
+\int_{\partial S^{(\rho)}}\langle \overline{\mathbb U}_f(e,w), \nu_{g_0}\rangle v_0^md\overline \sigma_{g_0}
\right\}.
\end{equation}
This is similar to the total charge in \cite[page 052504-4]{michel}.

We now consider the case $M_0=\mathbb R^{n+1}_+$, $g_0=\delta$ and $v_0=x_{n+1}$. We have
$$
\text{Ric}^k_{\delta,x_{n+1}}=\text{Ric}_{\delta}-kx_{n+1}^{-1}\nabla_\delta^2 x_{n+1}=0
$$
and
$$
\Delta^k_{\delta,x_{n+1}}h=\Delta_{\delta}h+kx_{n+1}^{-1}\delta(\nabla_\delta x_{n+1}, \nabla_\delta h)=\Delta_\delta h+kx_{n+1}^{-1}\frac{\partial h}{\partial x_{n+1}}.
$$
So, the static system (\ref{sys:static}) becomes
\begin{align*}%\label{eq:f:4}
\begin{cases}
\nabla_\delta^2f-(\Delta_\delta f+mx_{n+1}^{-1}\displaystyle\frac{\partial f}{\partial x_{n+1}})\delta=0,
\\
-\Delta_\delta f-(m-1)x_{n+1}^{-1}\displaystyle\frac{\partial f}{\partial x_{n+1}}=0,
\end{cases}
\end{align*}
The first equation is rewritten as 
$$
\nabla_\delta^2f=x_{n+1}^{-1}\frac{\partial f}{\partial x_{n+1}}\delta .
$$
After tracing, the static system finally becomes
\begin{align}\label{eq:f:3}
	\nabla_\delta^2f=0
	\:\:\:\:\text{and}\:\:\:
	x_{n+1}^{-1}\frac{\partial f}{\partial x_{n+1}}=0\,,
\end{align}
since $m\neq -n$. Observe that (\ref{eq:f:3}) is defined for $x_{n+1}>0$ and does not depend on $m$. 
On the other hand, the boundary static equations \eqref{sys:static:bdry} simplify to 
\begin{align}\label{eq:f:4}
	\frac{\partial f}{\partial x_{n+1}}=0\,,\:\:\text{on}\:\:x_{n+1}=0.
\end{align}
It is readily seen that the space of all solutions on $\mathbb R^{n+1}_+$ to the equations \eqref{eq:f:3} and \eqref{eq:f:4} is generated by  $\{1, x_1, ..., x_n\}$. Indeed, this follows from the  fact that $\{1, x_1, ..., x_{n+1}\}$ generates the space of functions $\widetilde f$ satisfying $\nabla_\delta^2\widetilde f=0$ in $\mathbb R^{n+1}$. In analogy with the classical ADM mass and the mass in \cite{almaraz-barbosa-lima}, we are lead to choose $f=1$ so that the limit \eqref{limit:2} becomes 
\begin{align*}%\label{limit:2}
\mathfrak m_{(g,v,m)}(1)=\lim_{\rho\to\infty}&\Big\{
\int_{|x|=\rho,\,x_{n+1>0}}\langle \mathbb U_1(e,w), \mu_{\delta}\rangle x_{n+1}^md\sigma
\\
&+\int_{|x|=\rho, \,x_{n+1}=0}\langle \overline{\mathbb U}_1(e,w), \nu_{\delta}\rangle x_{n+1}^md\overline \sigma
\Big\}.\notag
\end{align*}
This expression coincides with the one in \eqref{eq:mass} if we make $\mathfrak m_{(g,v,m)}=\mathfrak m_{(g,v,m)}(1)$, $\mathbb U=\mathbb U_1$ and $\overline{\mathbb U}=\overline{\mathbb U}_1$.
This justifies Definition \ref{def:mass}.

\begin{remark}
One could go a step further with the analogy with the classical concept of asymptotically flat manifolds and use  
$\mathbb U_{x_1}$, ..., $\mathbb U_{x_n}$ and $\overline{\mathbb U}_{x_1}$, ..., $\overline{\mathbb U}_{x_n}$ to define the coordinates of a center of mass for the asymptotically flat SMMS $(M,g,v^mdvol_g)$.
In the case $m=0$, this center of mass already appears in \cite{almaraz-lima} and in \cite{lima-girao-montalban}.
\end{remark}

\begin{remark}
	A mass type geometric quantity is studied in \cite{baldauf-ozuch} for weighted manifolds without boundary, i.e., $\mu=0$ and $m=\infty$ for SMMS. They also proved the corresponding positive mass theorems under the spin assumption which is extended to the case $3\leq n\leq 7$ in \cite{chu-zhu}. Later, this mass quantity is related to a mass-type geometric invariant for a SMMS in the case where $\mu=0$ in ~\cite{law-lopez-santiago} by considering the SMMS as the base of a warped product manifold.
\end{remark}

%Set $\mathbb U(e,\xi)=\mathbb U_1(e,\xi)$.
%Observe that 
%\begin{align*}
%\ged{
%	\mathbb U(e,w)=\,
%	\text{div}\,e-d\,\text{tr\,}e+my^{-1} dy\righthalfcup e-2my^{-1}\,dw,
%}
%\end{align*}
%or, in coordinates,
%\begin{align*}\marginnote{Os indices $i,\, j$ variam de $1$ a $n+1$,  mas uso o abuso de notacao $n+1=y$.}
%\mathbb U_i(e,w)=\,
%e_{ij,j}-e_{jj,i}+my^{-1} e_{iy}-2my^{-1}w_{,i}\,.
%\end{align*}

%%%%%%%%%%%%%%%%%%%%%%%%%%%%%%%%%%%%%%%%%%%%%%%%%%%%%%%%%%%%%%%%%%%%%%%%%%%%%%%

\section{The mass as a geometric invariant}\label{sec:geom:inv}

%\textcolor{red}{Give credits for this type of construction. Maybe Chrusciel or Bartnik.}

In this section we prove Theorem \ref{thm:geom:inv}. The case $m=0$ is proved in \cite{almaraz-barbosa-lima}. We first assume that $0<m<1$.
We will make use of the notation
$$
\mathbb U^{(h,f)}(e,w)=\text{div}_h e-d\text{tr}_h e+mf^{-1}\nabla_h f\righthalfcup e-2mf^{-1} dw
$$
where $h$ is a metric, $f$ is a positive smooth function, $e$ is a symmetric two-tensor and $w$ is a smooth function.

Fix an asymptotic coordinate system $(x_1,...,x_{n+1})$ coming from a diffeomorphism $\phi$ as in Definition \ref{def:as:flat}. Setting $e=\phi_*g-\delta$ and $w=v\circ \phi^{-1}-x_{n+1}$, the equation \eqref{eq:mass} is written as 
\begin{equation}\label{eq:geom:inv:1}
	\mathfrak m_{(g,v,m)}=\lim_{\rho\to\infty}\int_{|x|=\rho,\:x_{n+1}>0}\sum_{i=1}^{n+1}\mathbb U_i^{(\delta,x_{n+1})}(e,w)\frac{x_i}{|x|}x_{n+1}^m d\sigma,
\end{equation} 
since the integral on $x_{n+1}=0$ vanishes for $m>0$. We have the expansions
$$
e_{ij}(x)=a|x|^{1-n-m}\delta_{ij}+O'(|x|^{s-n-m}),\qquad{i,j=1,...,n+1},
$$
and
$$
w(x)=\frac{a}{2}|x|^{1-n-m}x_{n+1}+x_{n+1}O'(|x|^{s-n-m}),
$$
for some $0<s<1$.

Consider another asymptotic coordinate system $(y_1,..., y_{n+1})$ coming from a diffeomorphism $\widetilde\phi$. We want to prove that the right-hand side of \eqref{eq:geom:inv:1} equals
$$
\lim_{\rho\to\infty}\int_{|y|=\rho,\:y_{n+1}>0}\sum_{i=1}^{n+1}\mathbb U_i^{(\delta, y_{n+1})}(\widetilde e, \widetilde w)\frac{y_i}{|y|}y_{n+1}^m d\sigma,
$$
where $\widetilde e=\widetilde \phi_*g-\delta$ and $\widetilde w=v\circ \widetilde\phi^{-1}-y_{n+1}$.
We assume 
$$
\widetilde e_{ij}(y)=\widetilde a|y|^{1-n-m}\delta_{ij}+O'(|y|^{s-n-m}),\qquad{i,j=1,...,n+1},
$$
and
$$
\widetilde w(y)=\frac{\widetilde a}{2}|y|^{1-n-m}y_{n+1}+y_{n+1}O'(|y|^{s-n-m}).
$$
In particular we can choose $(n+m-1)/2<\tau<n-1$ such that $e_{ij}(x)=O(|x|^{-\tau})$, $w(x)=O(x_{n+1}|x|^{-\tau})$, $\widetilde e_{ij}(y)=O(|y|^{-\tau})$ and $\widetilde w(y)=O(y_{n+1}|y|^{-\tau})$. This can be done because $0<m<1$ and $n\geq 2$.

Set $\Phi=\widetilde \phi\circ\phi^{-1}$ so that $y_i(x)=\Phi_i(x)$, $i=1,...,n+1$.
It follows from Propositions 3.8 and 3.9 in \cite{almaraz-barbosa-lima} that, after composing $\Phi$ with an isometry of $\mathbb{R}^{n+1}_+$, we can assume
\begin{equation}\label{eq:geom:inv:2}
	\sum_{i=1}^n|x_i-y_i|=O'(|x|^{1-\tau}).
\end{equation}
so that the vector field $Z_i=y_i-x_i$ on $\mathbb R^{n+1}_+$ satisfies 
\begin{equation}\label{eq:geom:inv:4}
	\Phi^*\delta-\delta=L_Z\delta+O(|x|^{-2\tau}).
\end{equation}
Here, $L_Z$ stands for the Lie derivative. %Observe that 
%\begin{equation}\label{eq:geom:inv:5}
%\Phi_{n+1}(x)-x_{n+1}=Z_{n+1}.
%\end{equation}

%In particular, $Z_{n+1}=O(|x|^{1-\tau})$, because the left-hand side is $y_{n+1}-x_{n+1}$, but this fact will not be necessary in this proof.

\vspace{0.2cm}
\noindent
{\bf{Claim 1.}} We have
$$
\mathbb U_i^{(\delta, x_{n+1})}(L_Z\delta, L_Zx_{n+1})x_{n+1}^m=\sum_{j=1}^{n+1}\left((Z_{i,j}-Z_{j,i})x_{n+1}^m \right)_{,j},
\:\: i=1,...,n+1.
$$

Indeed, $(L_Z\delta)_{ij}=Z_{i,j}+Z_{j,i}$ and $L_Z x_{n+1}=Z(x_{n+1})=Z_{n+1}$. So,
\begin{align*}
	\mathbb U_i&(L_Z\delta,Z_{n+1})
	\\
	&=\sum_{j=1}^{n+1}(Z_{i,jj}+Z_{j,ij}-2Z_{j,ji})+mx_{n+1}^{-1}(Z_{i,n+1}+Z_{n+1,i})-2mx_{n+1}^{-1}Z_{n+1,i}
	\\
	&=\sum_{j=1}^{n+1}\left((Z_{i,j}-Z_{j,i})_{,j}+mx_{n+1}^{-1}\delta_{j\,n+1}(Z_{i,j}-Z_{j,i})\right)
\end{align*}
and Claim 1 follows from a direct computation.

\vspace{0.2cm}
\noindent
{\bf{Claim 2.}} We have 
\begin{align*}
	\lim_{\rho\to\infty}&\int_{|x|=\rho,\:x_{n+1}>0}\sum_{i=1}^{n+1}\mathbb U_i^{(\delta, x_{n+1})}(e, w)\frac{x_i}{|x|}x_{n+1}^md\sigma
	\\
	&=\lim_{\rho\to\infty}\int_{|x|=\rho,\:x_{n+1}>0}\sum_{i=1}^{n+1}\mathbb U_i^{(\delta, x_{n+1})}(\Phi^*\widetilde e, \widetilde w\circ\Phi)\frac{x_i}{|x|}x_{n+1}^md\sigma.
\end{align*}

Indeed, it follows from \eqref{eq:geom:inv:4}  that
$$
e-\Phi^*\widetilde e=\Phi^*\delta-\delta=L_Z\delta+O(|x|^{-2\tau})
$$ 
and 
$$
w-\widetilde w\circ\Phi=\Phi_{n+1}(x)-x_{n+1}=Z_{n+1}=L_Z x_{n+1}.
$$
Then it follows from Claim 1 and an integration by parts that
\begin{align*} 
	\int_{|x|=\rho,\:x_{n+1}>0}&\sum_{i=1}^{n+1}\mathbb U_i^{(\delta, x_{n+1})}(e, w)\frac{x_i}{|x|}x_{n+1}^md\sigma
	\\
	&\hspace{0.3cm}-\int_{|x|=\rho,\:x_{n+1}>0}\sum_{i=1}^{n+1}\mathbb U_i^{(\delta, x_{n+1})}(\Phi^*\widetilde e, \widetilde w\circ\Phi)\frac{x_i}{|x|}x_{n+1}^md\sigma
	\\
	&=\int_{|x|=\rho,\:x_{n+1}>0}\sum_{i,j=1}^{n+1}\left((Z_{i,j}-Z_{j,i})x_{n+1}^m \right)_{,j}\frac{x_i}{|x|}d\sigma+O(\rho^{n+m-1-2\tau})
	\\
	&=-\int_{|x|=\rho,\:x_{n+1}=0}\sum_{\alpha=1}^{n}(Z_{\alpha, n+1}-Z_{n+1,\alpha})x_{n+1}^m \frac{x_\alpha}{|x|}d\bar \sigma+O(\rho^{n+m-1-2\tau}).
\end{align*}
Here the integration by parts is performed on the truncated spherical cap $\{|x|=\rho,\ x_{n+1}>\varepsilon\}$ and then lets $\varepsilon\to0^+$. The last integral vanishes because $m>0$. Claim 2 holds by taking the limit as $\rho\to\infty$.

Let us use $\mu_h$ to denote the outward pointing unit normal vector field of a given hypersurface, calculated with respect to the metric $h$.
We have
\begin{align*}
	\int_{|x|=\rho,\:x_{n+1}>0}&\sum_{i=1}^{n+1}\mathbb U_i^{(\delta, x_{n+1})}(\Phi^*\widetilde e, \widetilde w\circ\Phi)\frac{x_i}{|x|}x_{n+1}^md\sigma
	\\
	&=\int_{|x|=\rho,\:x_{n+1}>0}\langle\mathbb U^{(\delta, x_{n+1})}(\Phi^*\widetilde e, \widetilde w\circ\Phi), \mu_\delta\rangle x_{n+1}^md\sigma
\end{align*} 
so that
\begin{align}\label{eq:geom:inv:6}
	\lim_{\rho\to\infty}\int_{|x|=\rho,\:x_{n+1}>0}&\sum_{i=1}^{n+1}\mathbb U_i^{(\delta, x_{n+1})}(\Phi^*\widetilde e, \widetilde w\circ\Phi)\frac{x_i}{|x|}x_{n+1}^md\sigma
	\\
	&=\lim_{\rho\to\infty}\int_{|x|=\rho,\:x_{n+1}>0}\langle\mathbb U^{(\delta, x_{n+1})}(\Phi^*\widetilde e, \widetilde w\circ\Phi), \mu_\delta\rangle x_{n+1}^md\sigma\notag
	\\
	&=\lim_{\rho\to\infty}\int_{|x|=\rho,\:x_{n+1}>0}\langle\mathbb U^{(\Phi^*\delta, \Phi_{n+1}(x))}(\Phi^*\widetilde e, \widetilde w\circ\Phi), \mu_{\Phi^*\delta}\rangle (\Phi_{n+1}(x))^md\sigma\notag
\end{align} 
as $\Phi^*\delta-\delta=O(\rho^{-\tau})$ and $x_{n+1}-\Phi_{n+1}(x)=O(x_{n+1}\rho^{-\tau})$.
Expressing the last integral in the coordinates $(y_1,...,y_{n+1})$, we have
\begin{align*}
	\lim_{\rho\to\infty}&\int_{|x|=\rho,\:x_{n+1}>0}\langle\mathbb U^{(\Phi^*\delta, \Phi_{n+1}(x))}(\Phi^*\widetilde e, \widetilde w\circ\Phi), \mu_{\Phi^*\delta}\rangle (\Phi_{n+1}(x))^md\sigma
	\\
	&=\lim_{\rho\to\infty}\int_{|\Phi^{-1}(y)|=\rho,\:y_{n+1}>0}\langle\mathbb U^{(\delta, y_{n+1})}(\widetilde e, \widetilde w), \mu_{\delta}\rangle y_{n+1}^md\sigma,
\end{align*} 
so, using Claim 2 and \eqref{eq:geom:inv:6}, we obtain
\begin{align*}
	\lim_{\rho\to\infty}&\int_{|x|=\rho,\:x_{n+1}>0}\sum_{i=1}^{n+1}\mathbb U_i^{(\delta, x_{n+1})}(e, w)\frac{x_i}{|x|}x_{n+1}^md\sigma
	\\
	&=\lim_{\rho\to\infty}\int_{|x|=\rho,\:x_{n+1}>0}\sum_{i=1}^{n+1}\mathbb U_i^{(\delta, x_{n+1})}(\Phi^*\widetilde e, \widetilde w\circ\Phi)\frac{x_i}{|x|}x_{n+1}^md\sigma
	\\
	&=\lim_{\rho\to\infty}\int_{|\Phi^{-1}(y)|=\rho,\:y_{n+1}>0}\langle\mathbb U^{(\delta, y_{n+1})}(\widetilde e, \widetilde w), \mu_{\delta}\rangle y_{n+1}^md\sigma\,.
\end{align*}
Since 
\begin{align*}
	\lim_{\rho\to\infty}&\int_{|y|=\rho,\:y_{n+1}>0}\langle\mathbb U^{(\delta, y_{n+1})}(\widetilde e, \widetilde w), \mu_{\delta}\rangle y_{n+1}^md\sigma
	\\
	&=\lim_{\rho\to\infty}\int_{|y|=\rho,\:y_{n+1}>0}\sum_{i=1}^{n+1}\mathbb U_i^{(\delta, y_{n+1})}(\widetilde e, \widetilde w)\frac{y_i}{|y|}y_{n+1}^md\sigma,
\end{align*}
it only remains to prove that 
\begin{align}\label{eq:geom:inv:7}
	\lim_{\rho\to\infty}&\int_{|y|=\rho,\:y_{n+1}>0}\langle\mathbb U^{(\delta, y_{n+1})}(\widetilde e, \widetilde w), \mu_{\delta}\rangle y_{n+1}^md\sigma
	\\
	&=\lim_{\rho\to\infty}\int_{|\Phi^{-1}(y)|=\rho,\:y_{n+1}>0}\langle\mathbb U^{(\delta, y_{n+1})}(\widetilde e, \widetilde w), \mu_{\delta}\rangle y_{n+1}^md\sigma.\notag
\end{align}
Write $\widetilde e=\widetilde e^0+O(|y|^{s-n-m})$ and $\widetilde w=\widetilde w^0+O(y_{n+1}|y|^{s-n-m})$, where 
$$
\widetilde e^0=\widetilde a|y|^{1-n-m}\delta
\qquad\text{and}\qquad
\widetilde w^0=\frac{\widetilde a}{2}|y|^{1-n-m}y_{n+1}.
$$
Then, equation \eqref{eq:geom:inv:7} is equivalent to 
\begin{align}\label{eq:geom:inv:8}
	\lim_{\rho\to\infty}&\int_{|y|=\rho,\:y_{n+1}>0}\langle\mathbb U^{(\delta, y_{n+1})}(\widetilde e^0, \widetilde w^0), \mu_{\delta}\rangle y_{n+1}^md\sigma
	\\
	&=\lim_{\rho\to\infty}\int_{|\Phi^{-1}(y)|=\rho,\:y_{n+1}>0}\langle\mathbb U^{(\delta, y_{n+1})}(\widetilde e^0, \widetilde w^0), \mu_{\delta}\rangle y_{n+1}^md\sigma.\notag
\end{align}

\vspace{0.2cm}
\noindent
{\bf{Claim 3.}} We have $\mathbb U_{n+1}^{(\delta,y_{n+1})}(\widetilde e^0, \widetilde w^0)y_{n+1}^m=(n+m)(m+n-1)\widetilde a|y|^{-1-n-m}y_{n+1}^{m+1}$.

Indeed, 
\begin{align*}
	\mathbb U_{n+1}^{(\delta,y_{n+1})}(\widetilde e^0, \widetilde w^0)y_{n+1}^m
	&=\sum_{j=1}^{n+1}(\widetilde e^0_{n+1\,j,j}-\widetilde e^0_{jj,n+1})y_{n+1}^m+my_{n+1}^{m-1}(\widetilde e_{n+1\,n+1}-2\widetilde w^0_{,n+1}).
\end{align*}
So the claim follows by observing that  
\begin{align*}
	\sum_{j=1}^{n+1}(\widetilde e^0_{n+1\,j,j}-\widetilde e^0_{jj,n+1})
	&=\sum_{j=1}^{n+1}\widetilde a(|y|^{1-n-m})_{,j}\delta_{n+1\,j}-\sum_{j=1}^{n+1}\widetilde a(|y|^{1-n-m})_{,n+1}\delta_{jj}
	\\
	&=n(m+n-1)\widetilde a|y|^{-1-n-m}y_{n+1}\end{align*}
and
\begin{align*}
	\widetilde e^0_{n+1\,n+1}-2\widetilde w^0_{,n+1}
	&=\widetilde a|y|^{1-n-m}\delta_{n+1\,n+1}-\widetilde a(|y|^{1-n-m}y_{n+1})_{,n+1}
	\\
	&=(m+n-1)\widetilde a|y|^{-1-n-m}y_{n+1}^2.
\end{align*}

In order to prove equation \eqref{eq:geom:inv:8}, we integrate by parts using Claim 3 to prove that the flux through $x_{n+1}=0$ vanishes and observe that there is $C>0$ satisfying
\begin{align*}
	\Big|\int_{|y|=\rho,\:y_{n+1}>0}&\langle\mathbb U^{(\delta, y_{n+1})}(\widetilde e^0, \widetilde w^0), \mu_{\delta}\rangle y_{n+1}^md\sigma
	\\
	&-\int_{|\Phi^{-1}(y)|=\rho,\:y_{n+1}>0}\langle\mathbb U^{(\delta, y_{n+1})}(\widetilde e^0, \widetilde w^0), \mu_{\delta}\rangle y_{n+1}^md\sigma
	\Big|
	\\
	&\leq \int_{C^{-1}\rho\leq |y|\leq C\rho }\left| \text{div}_\delta\left(\mathbb U^{(\delta,y_{n+1})}(\widetilde e^0, \widetilde w^0)y_{n+1}^m\right)\right|dy.
\end{align*}
Then equation \eqref{eq:geom:inv:8} follows from the following:

\vspace{0.2cm}
\noindent
{\bf{Claim 4.}} We have $\text{div}_\delta\left(\mathbb U^{(\delta,y_{n+1})}(\widetilde e^0, \widetilde w^0)y_{n+1}^m\right)=0$.

Indeed, for $i,j=1,2,...,n+1,$
\begin{align}\label{eq:geom:inv:9}
	\left(\mathbb U_i^{(\delta,y_{n+1})}(\widetilde e^0,\widetilde w^0)y_{n+1}^m \right)_{,i}
	=((\widetilde e^0_{ij,j}-\widetilde e^0_{jj,i})y_{n+1}^m)_{,i}+m\left((\widetilde e^0_{i\,n+1}-2\widetilde w^0_{,i})y_{n+1}^{m-1} \right)_{,i}\,.
\end{align}
Here and in what follows we are adopting the Einstein summation convention, meaning that repeated indices are summed.
We will show that each of the two terms on the right-hand side of \eqref{eq:geom:inv:9} vanish.
We calculate
\begin{align*}
	(\widetilde e^0_{ij,j}-\widetilde e^0_{jj,i})
	&=(\widetilde a|y|^{1-n-m}\delta_{ij})_{,j}-(\widetilde a|y|^{1-n-m}\delta_{jj})_{,i}
	\\
	&=-\widetilde an(|y|^{1-n-m})_{,i}
	\\
	&=\widetilde an(n+m-1)|y|^{-1-n-m}y_i.
\end{align*}
Hence,
\begin{align*}
	\left((\widetilde e^0_{ij,j}-\widetilde e^0_{jj,i})y_{n+1}^m\right)_{,i}
	&=\widetilde an(n+m-1)(|y|^{-1-n-m}y_iy_{n+1}^m)_{,i}
	\\
	&=\widetilde an(n+m-1)\Big((-1-n-m)|y|^{-3-n-m}y_iy_iy_{n+1}^m
	\\ 
	&\hspace{1cm}+(n+1)|y|^{-1-n-m}y_{n+1}^m+m|y|^{-1-n-m}y_i\delta_{i\,n+1}y_{n+1}^{m-1}\Big)
	\\
	&=0.
\end{align*}
For the second term in the right-hand side of \eqref{eq:geom:inv:9}, observe that
\begin{align*}
	\widetilde e^0_{i\,n+1}-2\widetilde w_{,i}
	&=\widetilde a|y|^{1-n-m}\delta_{i\,n+1}-\widetilde a(|y|^{1-n-m}y_{n+1})_{,i}
	\\
	&=\widetilde a(n+m-1)|y|^{-1-n-m}y_iy_{n+1}.
\end{align*}
Thus,
\begin{align*}
	m\left((\widetilde e^0_{i\,n+1}-2\widetilde w_{,i})y_{n+1}^{m-1}\right)_{,i}
	&=m(n+m-1)\widetilde a\Big((|y|^{-1-n-m}y_iy_{n+1})_{,i}y_{n+1}^{m-1}
	\\
	&\hspace{1cm}+|y|^{-1-n-m}y_iy_{n+1} (y_{n+1}^{m-1})_{,i}\Big)
	\\
	&=m(n+m-1)\widetilde a\Big((-1-n-m)|y|^{-3-n-m}y_iy_iy_{n+1}^m
	\\
	&\hspace{1cm}+(n+1)|y|^{-1-n-m}y_{n+1}^m+|y|^{-1-n-m}y_i\delta_{i\,n+1}y_{n+1}^{m-1}
	\\
	&\hspace{2cm}+(m-1)|y|^{-1-n-m}y_i\delta_{i\,n+1}y_{n+1}^{m-1}\Big)
	\\
	&=0.
\end{align*}
This proves Claim 4 and ends the proof for $0<m<1.$

When $-1<m<0$, recall that in this case, $$e_{ij}(x)=a|x|^{1-n-m}\delta_{ij}+O'(|x|^{s-n-m}),\qquad{i,j=1,...,n+1},$$ with $O'(|x|^{s-n-m})=O'(|x|^{s-n-m})\delta_{ij}+x_{n+1}O'(|x|^{s-n-m-1})$ and the analogous assumptions hold in the $\widetilde{\phi}$-coordinate system. The proof follows the same lines as the case $0<m<1$ with slight modifications by noticing
$$Z_{\alpha,n+1}=O(x_{n+1}|x|^{-\tau-1}), \:\:\: Z_{n+1,\alpha}=O(x_{n+1}|x|^{-\tau-1}),$$ 
$$(\Phi^*\widetilde{e})_{n+1\:\alpha}=O(x_{n+1}|x|^{-\tau-1}),$$ 
and $$\Phi^*\delta-\delta=L_Z\delta+Q,$$ where $Q=O(|x|^{-2\tau})$ and $Q_{n+1 \: \alpha}=O(x_{n+1}|x|^{-2\tau-1})$ for $\alpha=1,2,...,n$. This ends the proof of Theorem~\ref{thm:geom:inv}.

%%%%%%%%%%%%%%%%%%%%%%%%%%%%%%%%%%%%%%%%%%%%%%%%%%%%%%%%%%%

%%%%%%%%%%%%%%%%%%%%%%%%%%%%%%%%%%%%%%%

\appendix
\section{Appendix}

In this section, $(M, g, v^mdvol_g)$ is a SMMS. We first calculate the variation $R^{\bullet}_{(g,v,m)}(e,w)$.

\begin{proposition}\label{propo:R:var}
	We have
	\begin{align*}
		R^{\bullet}_{(g,v,m)}(e,0)
		&=\text{div}_g(\text{div}_ge-d\text{tr}_ge)+mv^{-1}(\text{div}_ge-d\text{tr}_ge)(\nabla_gv)
		\\
		&\hspace{0.3cm}+mv^{-1}\left( (m-1)v^{-1}e(\nabla_gv, \nabla_gv)+\text{div}_ge(\nabla_gv)+\langle e,\nabla^2_gv\rangle_g\right)
		\\
		&\hspace{0.6cm}+\langle -\text{Ric}_g+mv^{-1}\nabla^2_gv,e\rangle_g
	\end{align*}
	and
	\begin{align*}
		R^{\bullet}_{(g,v,m)}(0,w)
		&= 2mv^{-1}\left(v^{-1}(\Delta_gv) w+(m-1)v^{-2}|\nabla_gv|^2_gw -(m-1) v^{-2}w\right)
		\\
		&\hspace{0.3cm}-2mv^{-1}\left(\Delta_gw+(m-1)v^{-1}g(\nabla_gv,\nabla_gw) \right).
	\end{align*}
	In particular, 
	\begin{align*}
		v^mR^{\bullet}_{(g,v,m)}(e,0)
		&=\text{div}_g\left( v^m(\text{div}_ge-d\text{tr}_ge) \right)+m\,\text{div}_g(v^{m-1}\nabla_gv\righthalfcup e)
		\\
		&\hspace{0.3cm}+\langle -\text{Ric}_g+mv^{-1}\nabla^2_gv,e\rangle_g v^m
	\end{align*}
	and
	\begin{align*}
		v^mR^{\bullet}_{(g,v,m)}(0,w)
		&=2mv^{m-3}\left( v\Delta_gv+(m-1)|\nabla_g v|_g^2-(m-1)\right)w
		\\
		&\hspace{0.3cm}-2m\,\text{div}_g(v^{m-1} dw).
	\end{align*}
\end{proposition}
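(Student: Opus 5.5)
The plan is to differentiate the defining expression \eqref{eq:R} term by term, using the standard first variation formulas of Riemannian geometry, and then to regroup the result. Write $g_t=g+te$ and $v_t=v+tw$, and treat the two variations $(e,0)$ and $(0,w)$ separately. $R^{\bullet}$ is linear in $(e,w)$, so the general variation is their sum.

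For the metric variation $(e,0)$ I would use three classical formulas:
\begin{itemize}
\item $R_g^{\bullet}(e)=\text{div}_g\text{div}_g e-\Delta_g\text{tr}_g e-\langle \text{Ric}_g,e\rangle_g$;
\item $(\Delta_g v)^{\bullet}(e)=-\langle e,\nabla^2_g v\rangle_g-(\text{div}_g e-\tfrac12 d\text{tr}_g e)(\nabla_g v)$, which comes from varying the Christoffel symbols;
\item $(|\nabla_g v|^2_g)^{\bullet}(e)=-e(\nabla_g v,\nabla_g v)$.
\end{itemize}
The term $m(m-1)v^{-2}$ does not depend on $g$. Substituting gives
$$R^{\bullet}_{(g,v,m)}(e,0)=\text{div}_g(\text{div}_g e-d\text{tr}_g e)-\langle\text{Ric}_g,e\rangle_g+mv^{-1}\big(2\,\text{div}_g e(\nabla_g v)-d\text{tr}_g e(\nabla_g v)+2\langle e,\nabla_g^2 v\rangle_g\big)+m(m-1)v^{-2}e(\nabla_g v,\nabla_g v).$$
I would then redistribute the $mv^{-1}$ terms into the grouping stated in the proposition: one copy of $(\text{div}_g e-d\text{tr}_g e)(\nabla_g v)$, one copy of $\text{div}_g e(\nabla_g v)+\langle e,\nabla^2_g v\rangle_g$, the $(m-1)v^{-1}e(\nabla_g v,\nabla_g v)$ term, and $\langle mv^{-1}\nabla^2_g v,e\rangle_g$ moved into the last bracket. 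This is only bookkeeping.

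For the weight variation $(0,w)$ the metric is fixed, so I simply differentiate in $v$:
\begin{itemize}
\item $-2mv^{-1}\Delta_g v$ contributes $-2mv^{-1}\Delta_g w+2mv^{-2}w\Delta_g v$;
\item $-m(m-1)v^{-2}|\nabla_g v|^2_g$ contributes $-2m(m-1)v^{-2}g(\nabla_g v,\nabla_g w)+2m(m-1)v^{-3}w|\nabla_g v|^2_g$;
\item $m(m-1)v^{-2}$ contributes $-2m(m-1)v^{-3}w$.
\end{itemize}
Factoring out $2mv^{-1}$ gives exactly the second displayed formula.

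The ``in particular'' statements come from multiplying by $v^m$ and applying the product rule in reverse. Two identities do all the work:
$$\text{div}_g(v^m X)=v^m\text{div}_g X+mv^{m-1}X(\nabla_g v),$$
$$\text{div}_g(v^{m-1}\nabla_g v\righthalfcup e)=v^{m-1}\big(\text{div}_g e(\nabla_g v)+\langle e,\nabla^2_g v\rangle_g\big)+(m-1)v^{m-2}e(\nabla_g v,\nabla_g v).$$
For $(e,0)$, the first identity with $X=\text{div}_g e-d\text{tr}_g e$ absorbs the first two terms of the formula, and the second identity absorbs the third bracket. For $(0,w)$, the expansion $\text{div}_g(v^{m-1}dw)=v^{m-1}\Delta_g w+(m-1)v^{m-2}g(\nabla_g v,\nabla_g w)$ absorbs the second line, and what remains is the zeroth-order term in $w$.

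The only step where an error could realistically occur is the variation of $\Delta_g v$, specifically the sign and the factor $\tfrac12$ on $d\text{tr}_g e$. I would check it in local coordinates from $\dot\Gamma^k_{ij}=\tfrac12 g^{kl}(e_{il,j}+e_{jl,i}-e_{ij,l})$. Everything else is routine algebra.
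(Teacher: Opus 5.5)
Your proposal is correct and follows essentially the same route as the paper: vary $R_{(g,v,m)}$ term by term using the standard formulas for $R_g^{\bullet}$, $(\Delta_g v)^{\bullet}$ and $(|\nabla_g v|^2_g)^{\bullet}$. The paper groups the $v$-dependent terms into $F_{g,v}$ and varies $(e,w)$ together, which is the same computation, and your product-rule identities supply the regrouping for the ``in particular'' formulas that the paper leaves implicit.
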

\begin{proof}
	We write
	$$
	R_{(g,v,m)}=R_g-mF_{g,v},
	$$
	where 
	$$
	F_{g,v}=2v^{-1}\Delta_gv+(m-1)v^{-2}|\nabla_gv|^2_g-(m-1) v^{-2}.
	$$
	Then
	$$
	R^{\bullet}_{g,v}=R^{\bullet}_g-mF^{\bullet}_{g,v}.
	$$
	We will make use of the well known formula
	$$
	R^{\bullet}_g=\text{div}_g \text{div}_g e-\Delta_g \text{tr}_ge-\langle \text{Ric}_g, e\rangle_g
	$$
	and directly calculate
	\begin{align*}
		F^{\bullet}_{g,v}&=-2v^{-2}(\Delta_gv) w+2v^{-1}(\Delta_gv)^{\bullet}
		\\
		&\hspace{0.3cm}-2(m-1)v^{-3}|\nabla_gv|_g^2 w+(m-1)v^{-2}(|\nabla_gv|^2_g)^{\bullet}+2(m-1) v^{-3}w.
	\end{align*}
	Since
	$$
	(|\nabla_gv|^2_g)^{\bullet}=-e(\nabla_g v,\nabla_g v)+2g(\nabla_g v, \nabla_g w)
	$$
	and
	$$
	(\Delta_gv)^{\bullet}=\Delta_g w+\frac{1}{2}g(\nabla_g \text{tr}_ge,\nabla_g v)-(\text{div}_g e)(\nabla_g v)-\langle e,\nabla^2_g v\rangle_g,
	$$
	we have
	\begin{align*}
		F^{\bullet}_{g,v}=
		&-2v^{-2}(\Delta_gv) w+v^{-1}g(\nabla_g \text{tr}_ge,\nabla_g v)+2v^{-1}\Delta_g w
		\\
		&-2v^{-1}(\text{div}_ge)(\nabla_g v)-2v^{-1}\langle e, \nabla^2_gv\rangle_g-2(m-1)v^{-3}|\nabla_gv|^2_g w
		\\
		&-(m-1)v^{-2}e(\nabla_g v,\nabla_g v)+2(m-1)v^{-2}g(\nabla_g v, \nabla_g w)+2(m-1) v^{-3}w.
	\end{align*}

	The result now follows.
\end{proof}

We now turn to variations $\mathcal H^{\bullet}_{(g,v,m)}(e,w)$.

\begin{proposition}\label{propo:H:var}
	We have
	\begin{align*}
		2\mathcal H^{\bullet}_{(g,v,m)}(e,w)&=(d\text{tr}_ge-\text{div}_ge)(\eta)-\text{div}_{g|_\Sigma}(\eta \righthalfcup e)
		-\langle \pi_g,e\rangle_{g|_\Sigma}\\
		&\hspace{0.3cm}+(-2mv^{-1}\nabla_g v\righthalfcup e+mv^{-1}e(\eta,\eta)dv+2mv^{-1}dw)\eta\\
           &\hspace{0.6cm}-2mv^{-2}w dv(\eta)-2mv^{-2}w,		
	\end{align*}
	where $\pi_g$ is the second fundamental form of $\Sigma$ and $\eta$ is the outward pointing normal vector on $\Sigma$. In particular, in the case $\nabla_gv=-\eta$, 
\begin{align*}
		v^m2\mathcal H^{\bullet}_{(g,v,m)}
		&=v^m(d\text{tr}_ge-\text{div}_ge)(\eta)-\text{div}_{g|_\Sigma}(v^m \eta\righthalfcup e)
		\\
		&\hspace{0.6cm}-\langle \pi_g,e\rangle_{g|_\Sigma}v^m-mv^{m-1}(\nabla_gv\righthalfcup e)(\eta)\\
		&\hspace{0.6cm}+2mv^{m-1}dw (\eta).
	\end{align*}
\end{proposition}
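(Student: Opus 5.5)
The plan is to split $\mathcal H_{(g,v,m)}=H_g+mv^{-1}\eta(v)+mv^{-1}$ into three pieces and differentiate each along the variation $(g,v)\mapsto(g+te,v+tw)$ at $t=0$. The hypersurface $\Sigma$ is held fixed, while its unit normal $\eta$ depends on $g$.

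\emph{First piece.} For $H_g$ I will take the standard first variation of mean curvature of a fixed hypersurface under a metric change, in the form used in \cite{almaraz-barbosa-lima}:
$$
2H_g^{\bullet}=(d\text{tr}_ge-\text{div}_ge)(\eta)-\text{div}_{g|_\Sigma}(\eta\righthalfcup e)-\langle \pi_g,e\rangle_{g|_\Sigma}.
$$
Only the sign conventions for $\eta$ (outward) and for $\pi_g$ need checking against \eqref{eq:H}. I expect this to be the main obstacle: it is where an error would propagate. I will fix the signs by testing the formula on a conformal variation $e=2\psi g$ of a flat half-space.

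\emph{Normal variation.} Next I compute $\eta^{\bullet}$. The conormal covector $\nu=g(\eta,\cdot)$ annihilates $T\Sigma$ and is normalized by $g^{-1}(\nu,\nu)=1$. Hence $\nu^{\bullet}=\tfrac12 e(\eta,\eta)\nu$, and raising the index with $g^{-1}$ gives
$$
\eta^{\bullet}=-(\eta\righthalfcup e)^{\sharp}+\tfrac12 e(\eta,\eta)\eta .
$$

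\emph{Second and third pieces.} Applying this to $mv^{-1}dv(\eta)$ gives
$$
-mv^{-2}w\,dv(\eta)+mv^{-1}dw(\eta)-mv^{-1}e(\nabla_gv,\eta)+\tfrac12 mv^{-1}e(\eta,\eta)dv(\eta).
$$
The term $mv^{-1}$ simply contributes $-mv^{-2}w$. Doubling these and adding them to $2H_g^{\bullet}$ yields exactly the stated expression for $2\mathcal H^{\bullet}_{(g,v,m)}(e,w)$.

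\emph{Special case $\nabla_gv=-\eta$.} Here $dv(\eta)=-1$, so the two $w$-terms $-2mv^{-2}w\,dv(\eta)-2mv^{-2}w$ cancel. The $e$-terms combine as $2mv^{-1}e(\eta,\eta)-mv^{-1}e(\eta,\eta)=mv^{-1}e(\eta,\eta)=-mv^{-1}(\nabla_gv\righthalfcup e)(\eta)$.

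\emph{Multiplying by $v^m$.} Finally I multiply by $v^m$ and move $v^m$ inside $\text{div}_{g|_\Sigma}$. This is legitimate because $\nabla_gv$ is normal to $\Sigma$: the tangential derivative of $v^m$ vanishes, so $v^m\text{div}_{g|_\Sigma}(\eta\righthalfcup e)=\text{div}_{g|_\Sigma}(v^m\eta\righthalfcup e)$. This gives the second displayed identity. As in the rest of Section~\ref{sec:interpr:mass}, negative powers of $v$ on $\Sigma$ are treated formally.
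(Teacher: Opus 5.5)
Your proposal is correct and follows the paper's proof essentially step for step. The paper likewise splits $2\mathcal H^{\bullet}_{(g,v,m)}$ into three parts: $2H_g^{\bullet}$, which it quotes as a well-known formula; $2mv^{-1}(dv(\eta))^{\bullet}$, whose stated formula is exactly what your $\eta^{\bullet}=-(\eta\righthalfcup e)^{\sharp}+\tfrac12 e(\eta,\eta)\eta$ produces; and the $w$-terms. It then uses $\nabla_gv=-\eta$ to cancel the $w$-terms, combine the $e(\eta,\eta)$-terms, and kill the extra term $mv^{m-1}e(\eta,\nabla^T_gv)$ that appears when $v^m$ is moved inside $\text{div}_{g|_\Sigma}$.

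One minor remark on your planned sign check. On a flat half-space with $e=2\psi g$ you have $\pi_g=0$, and the tangential part of $\eta\righthalfcup e$ vanishes, so only the sign of $(d\text{tr}_ge-\text{div}_ge)(\eta)$ is actually tested. To check the sign of $\langle\pi_g,e\rangle_{g|_\Sigma}$ you need a curved hypersurface, e.g.\ a round sphere, where the formula must reproduce $2H_g^{\bullet}=-2\psi H_g+2n\,\partial_\eta\psi$.
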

\begin{proof}
	We write
	$$
	2\mathcal H^{\bullet}_{(g,v,m)}=2 H^{\bullet}_g+2mv^{-1}(dv(\eta))^{\bullet}-2mv^{-2}wdv(\eta)-2mv^{-2}w
	$$
	and make use of the well known formula
	$$
	2H^{\bullet}_g=(d\text{tr}_ge-\text{div}_ge)(\eta)-\text{div}_{g|_\Sigma}(\eta \righthalfcup e)-\langle \pi_g,e\rangle_{g|_\Sigma}.
	$$
	The first result follows by applying the following:
	$$(dv(\eta))^{\bullet}=(-\nabla_g v\righthalfcup e+\frac{1}{2}e(\eta,\eta)dv+dw)\eta. $$

Then,
\begin{align*}
		v^m2\mathcal H^{\bullet}_{(g,v,m)}
		&=v^m(d\text{tr}_ge-\text{div}_ge)(\eta)-\text{div}_{g|_\Sigma}(v^m \eta\righthalfcup e)
		\\
		&\hspace{0.6cm}+mv^{m-1}e(\eta,\nabla^{T}_gv)-\langle \pi_g,e\rangle_{g|_\Sigma}v^m\\
		&\hspace{0.6cm}-2mv^{m-1}(\nabla_gv\righthalfcup e)(\eta)+mv^{m-1}e(\eta,\eta)dv (\eta)\\
		&\hspace{0.6cm}+2mv^{m-1}dw (\eta)-2mv^{m-2}w dv (\eta)-2mv^{m-2}w\\
		&=v^m(d\text{tr}_ge-\text{div}_ge)(\eta)-\text{div}_{g|_\Sigma}(v^m \eta\righthalfcup e)
		\\
		&\hspace{0.6cm}-\langle \pi_g,e\rangle_{g|_\Sigma}v^m-mv^{m-1}(\nabla_gv\righthalfcup e)(\eta)\\
		&\hspace{0.6cm}+2mv^{m-1}dw (\eta).
	\end{align*}
	Here we used the fact that $\nabla_g v=-\eta$.
\end{proof}

%%%%%%%%%%%%%%%%%%%%%%%%%%%%%%%%%%%%%%%

\bigskip\noindent
{\bf{Acknowledgement.}} Part of this work was carried out during the first author's visit to Nanjing University of Science and Technology in the summer of 2025. He is very grateful to the hospitality of the School of Mathematics and Statistics.

\bigskip\noindent
\textsc{S\'ergio Almaraz\\
Instituto de Matem\'atica e Estat\' istica, \\
Universidade Federal Fluminense\\
Rua Prof. Marcos Waldemar de Freitas S/N,
Niter\'oi, RJ,  24.210-201, Brazil.}\\
e-mail: {\bf{sergioalmaraz@id.uff.br}}

\bigskip\noindent
\textsc{Levi Lopes de Lima\\
Universidade Federal do Cear\'a (UFC),\\
Departamento de Matem\'{a}tica, Campus do Pici, Av. Humberto Monte, s/n, Bloco 914, 60455-760,\\
Fortaleza, CE, Brazil.}\\
e-mail: {\bf{levi@mat.ufc.br}}

\bigskip\noindent
\textsc{Shaodong Wang\\
School of Mathematics and Statistics,\\
Nanjing University of Science and Technology\\
Nanjing, 210094, People’s Republic of China.} \\
e-mail: {\bf{shaodong.wang@mail.mcgill.ca}}

\end{document}